\renewcommand{\mathcal}{\mathscr}
\theoremstyle{plain} 
\newtheorem{theorem}{Theorem}[section]
\newtheorem{lemma}[theorem]{Lemma}
\newtheorem{remark}[theorem]{Remark}
\newtheorem{example}[theorem]{Example}
\newtheorem*{Otheorem}{Theorem}
\title[Takagi class functions]{On the rate of convergence for Takagi class functions}
\author{Shoto Osaka}
\address{Graduate School of Engineering Science, Yokohama National University}
\email{osaka-shoto-jc@ynu.jp}
\author{Masato Takei}
\address{Department of Applied Mathematics, Faculty of Engineering, Yokohama National University}
\email{takei-masato-fx@ynu.ac.jp}
\begin{document}
\begin{abstract}
We consider a generalized version of the Takagi function,
which is one of the most famous example of nowhere differentiable continuous functions.
We investigate a set of conditions to describe the rate of convergence of Takagi class functions from the probabilistic point of view: The law of large numbers, the central limit theorem, and the law of iterated logarithm. On the other hand, we show that the Takagi function itself does not satisfy the law of large numbers in the usual sense.
\end{abstract}

\maketitle

\section{Introduction}
\label{intro}

The tent-map on $[0,1]$ is defined by
\[ \varphi(x):= 
\begin{cases} 
 2x &(x \in [0,1/2]), \\ 
 2(1-x)&(x \in [1/2,1]). \\
\end{cases}
\]
Let $\varphi^{(1)}(x):=\varphi(x)$, and $\varphi^{(n)}(x)$ denote the $n$-fold iteration of $\varphi(x)$.
For $x \in \mathbb{R}$, we write $\varphi(x)$ for $\varphi(x-\lfloor x \rfloor)$.
Then we can see that $\varphi(x)$ has period 1, and
\[ \varphi^{(n)}(x) = \varphi(2^{n-1} x) \quad \mbox{for each $n=1,2,\cdots$.} \]

The continuous function defined by
\[
T(x):=\sum_{n=1}^{\infty} \dfrac{1}{2^n} \varphi^{(n)}(x)
\]
is called the {\it Takagi function}.
Takagi \cite{Takagi1903} shows that $T(x)$ has nowhere finite derivative (the above definition is different from but equivalent to the original one given in \cite{Takagi1903}). We refer to an excellent survey paper by Allaart and Kawamura \cite{AllaartKawamura11/12} for several known properties of $T(x)$ and its generalizations.

We say a real sequence $\{ c_n \} \in \ell^p$ if 
\[
\sum_{n=1}^{\infty} |c_n|^p < +\infty.
\]
If $\{c_n\} \in \ell^1$, then for each $x \in [0,1]$, the limit
\begin{equation}
f(x):=\sum_{n=1}^{\infty} c_n \varphi^{(n)}(x) \label{eq:TakagiClassFunctionDef}
\end{equation}
exists, and in fact the convergence is uniform over $[0,1]$: If we set
\begin{equation}
f_N(x) := \sum_{n=1}^{N-1} c_n \varphi^{(n)}(x) \quad \mbox{for $N=1,2,\cdots$},\label{eq:TakagiClassFunctionDefN}
\end{equation}
then
\begin{align} \label{ineq:TakagiRateConvGen}
\sup_{x \in [0,1]} |f(x)-f_N(x)| \leq \sum_{n=N}^{\infty} |c_n|.
\end{align}
Thus $f(x)$ is a continuous function on $[0,1]$. 
On the other hand, if $f(x)$ converges for all $x \in [0,1]$, then $\{c_n\} \in \ell^1$ (Hata and Yamaguti \cite{HataYamaguti84JJAM}). 
The set of all functions defined by \eqref{eq:TakagiClassFunctionDef} with $\{c_n\} \in \ell^1$ is 
called the {\it Takagi class}. 

K\^{o}no \cite{Kono87ActaMathHungar} studied the differentiability and the modulus of continuity of Takagi class functions, from the probabilistic point of view: We regard the functions $f(x)$ and $f_N(x)$ defined by \eqref{eq:TakagiClassFunctionDef} and \eqref{eq:TakagiClassFunctionDefN} as a random variable on the Lebesgue probability space $(\Omega,\mathcal{F},P)$, where $\Omega=[0,1]$, $\mathcal{F}$ is the Borel $\sigma$-field of $\Omega$, and $P$ is the Lebesgue measure on $\Omega$. We quote the result on the differentiability proved in \cite{Kono87ActaMathHungar}.

\begin{Otheorem}[\cite{Kono87ActaMathHungar}, Theorem 2] Assume that $\{c_n\} \in \ell^1$, and consider the continuous function $f(x)$ defined by \eqref{eq:TakagiClassFunctionDef}.
\begin{itemize}
\item[(i)] If $\{2^n c_n \} \in \ell^2$, then $f(x)$ is absolutely continuous --- differentiable at almost every $x$.
\item[(ii)] If $\{2^n c_n \} \notin \ell^2$ but $\displaystyle \lim_{n \to \infty} 2^n c_n =0$, then $f(x)$ is non differentiable at almost every $x$, but differentiable on an uncountable set. 
\item[(iii)] If $\displaystyle \limsup_{n \to \infty} 2^n |c_n| >0$, then $f(x)$ is nowhere differentiable.
\end{itemize}
\end{Otheorem}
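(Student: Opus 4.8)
The plan is to reduce every clause to the behaviour of the dyadic chord-slopes of $f$, which I will recognise as partial sums of a Rademacher series with coefficients $2^n c_n$. For non-dyadic $x$ write $r_n(x)\in\{+1,-1\}$ for the sign of the slope of $\varphi^{(n)}$ at $x$, so that $(\varphi^{(n)})'(x)=2^n r_n(x)$ wherever the derivative exists; under $P$ the family $\{r_n\}$ is the Rademacher system, an orthonormal sequence of independent symmetric signs in $L^2(\Omega)$. Let $[a_N,b_N]$ be the generation-$N$ dyadic interval containing $x$. I would first record the elementary fact that $\varphi^{(n)}$ vanishes at every dyadic point of generation at most $n-1$; hence the tail $\sum_{n\ge N+1}c_n\varphi^{(n)}$ is zero at $a_N$ and $b_N$, so $f$ coincides there with the affine function $f_{N+1}=\sum_{n=1}^{N}c_n\varphi^{(n)}$, and the chord-slope across $[a_N,b_N]$ equals
\[
D_N(x):=\frac{f(b_N)-f(a_N)}{b_N-a_N}=\sum_{n=1}^{N}2^n c_n\,r_n(x),
\]
the $N$-th partial sum of the formal termwise derivative $\sum_n 2^n c_n r_n$.

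The engine for the two negative statements is the necessity half: for non-dyadic $x$, if $f'(x)$ exists and is finite, then $\lim_N D_N(x)$ exists and equals it. Indeed, writing $D_N(x)$ as the convex combination $\lambda_N\frac{f(b_N)-f(x)}{b_N-x}+(1-\lambda_N)\frac{f(x)-f(a_N)}{x-a_N}$ with $\lambda_N=(b_N-x)/(b_N-a_N)\in(0,1)$, both one-sided quotients tend to $f'(x)$, so $D_N(x)\to f'(x)$; in particular $2^{N}c_{N}\,r_{N}(x)=D_N(x)-D_{N-1}(x)\to0$. The converse --- that convergence of $D_N(x)$ forces differentiability --- is more delicate and, as explained below, holds only at points whose binary expansion is suitably spread out. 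It rests on the remainder estimate, with $M=M(x,y)$ the generation at which $x$ and $y$ separate,
\[
f(y)-f(x)-D_{M}(x)(y-x)=\sum_{n\ge M+1}c_n\bigl[\varphi^{(n)}(y)-\varphi^{(n)}(x)\bigr],
\]
which, splitting into a short Lipschitz range and a tail range about the scale $\log_2(1/|y-x|)$ and bounding each summand by $|c_n|\min(2^n|y-x|,1)$, is $o(|y-x|)$ once $2^n|c_n|\to0$, $\sum_{n>K}|c_n|=o(2^{-K})$, and $|y-x|\asymp 2^{-M(x,y)}$.

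With this dictionary the three clauses follow. For (iii), since $\limsup_n 2^n|c_n|>0$ the increments $D_N-D_{N-1}=2^Nc_Nr_N$ do not tend to $0$ along a subsequence, so $D_N(x)$ diverges at \emph{every} non-dyadic $x$; by necessity $f$ has no finite derivative there, and a one-sided version of the same slope argument disposes of the dyadic points, giving nowhere differentiability (no decay hypothesis is used). For (i) I would pass to $L^2(\Omega)$: the a.e.\ derivatives $g_N:=f_N'=\sum_{n=1}^{N-1}2^nc_nr_n$ form a Cauchy sequence in $L^2$ because $\{r_n\}$ is orthonormal and $\{2^nc_n\}\in\ell^2$, hence $g_N\to g$ in $L^2\subset L^1$; letting $N\to\infty$ in $f_N(x)=\int_0^x g_N$ (using the uniform convergence \eqref{ineq:TakagiRateConvGen} and $f_N(0)=0$) yields $f(x)=\int_0^x g$, so $f$ is absolutely continuous, and a fortiori differentiable a.e. For the first half of (ii), $\{2^nc_n\}\notin\ell^2$ forces, by the Khinchin--Kolmogorov theorem for the independent symmetric series $\sum 2^nc_nr_n$, divergence of $D_N(x)$ for $P$-a.e.\ $x$; by necessity $f$ is non-differentiable a.e.

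The genuinely delicate point, and what I expect to be the main obstacle, is the second half of (ii): producing uncountably many $x$ at which $f$ \emph{is} differentiable while $\{2^nc_n\}\notin\ell^2$. Convergence of $D_N(x)$ is only necessary, and the sufficiency estimate above silently assumed $|y-x|\asymp 2^{-M(x,y)}$, which fails precisely when $x$ has long runs in its binary expansion, so that an approaching $y$ can straddle a dyadic point very close to $x$. The plan is therefore to construct the exceptional points by prescribing the digits of $x$ so as to achieve three things at once: (a) choose the signs $r_n(x)$ so that $\sum 2^nc_nr_n$ converges --- possible since $2^nc_n\to0$, e.g.\ by a greedy sign choice driving the partial sums to $0$; (b) keep the expansion spread out, with no long runs, so that $|y-x|\asymp 2^{-M(x,y)}$ and the remainder estimate closes for all $y\to x$, not merely along dyadic endpoints; and (c) reserve a sparse infinite set of indices $\{n_k\}$ with $\sum_k 2^{n_k}|c_{n_k}|<\infty$ at which either sign is admissible, so that flipping them gives $2^{\aleph_0}$ distinct points of differentiability. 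Reconciling (a) with (b) --- controlling the full difference quotient rather than only the dyadic slopes --- is the crux of the matter.
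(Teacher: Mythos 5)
First, a point of reference: the paper does not prove this theorem at all --- it is quoted verbatim from K\^{o}no \cite{Kono87ActaMathHungar} as background, so there is no in-paper proof to compare yours against. Judged on its own merits, your reduction to the dyadic chord slopes $D_N(x)=\sum_{n=1}^{N}2^n c_n R_n(x)$ is the classical route (and essentially K\^{o}no's). The identity for $D_N$ is correct, since $\varphi^{(n)}$ vanishes at dyadic points of generation at most $n-1$ and is affine with slope $2^nR_n(x)$ on each generation-$n$ interval; the convex-combination argument correctly gives the necessity direction at non-dyadic points; (iii) then follows because the increments $2^Nc_NR_N(x)$ are bounded away from $0$ along a subsequence, with the one-sided variant handling dyadic points; (i) follows from the $L^2$-Cauchy property of $f_N'=\sum_{n<N}2^nc_nR_n$ and passage to the limit in $f_N(x)=\int_0^x f_N'$; and the a.e.\ non-differentiability in (ii) follows from the a.s.\ divergence of $\sum a_nR_n$ when $\sum a_n^2=\infty$. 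Your remainder estimate for the sufficiency direction is also correct as stated: the tail beyond the separation scale is $o(2^{-K})=o(|y-x|)$ because $2^n|c_n|\to 0$, and the intermediate range is controlled precisely when $K-M(x,y)$ stays bounded, i.e.\ when $|y-x|\asymp 2^{-M(x,y)}$.

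The one genuine gap is the part you flag yourself: the uncountable differentiability set in (ii). Requirements (a) (convergence of $\sum 2^nc_nR_n(x)$) and (b) (bounded digit runs, so that $|y-x|\asymp 2^{-M(x,y)}$ and the remainder estimate applies to \emph{all} $y\to x$) are left unreconciled, and a naive greedy sign choice can indeed force arbitrarily long runs. The standard way to close this: work in consecutive pairs of indices and allow only the digit patterns $01$ and $10$ on the pair $(2i-1,2i)$, which caps every run at length $2$ while still leaving free the sign of that pair's contribution $\delta_i=2^{2i-1}c_{2i-1}-2^{2i}c_{2i}$ to the slope series; since $|\delta_i|\to 0$, a greedy choice of these signs makes the partial sums converge (and the partial sums at odd cut points differ by $2^{2i-1}|c_{2i-1}|\to 0$, so all of $D_N$ converges). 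Reserving a sparse subsequence of pairs with $\sum_k|\delta_{i_k}|<\infty$ on which both signs are admissible then produces $2^{\aleph_0}$ distinct points (distinct because bounded runs exclude dyadic ambiguity), at each of which your remainder estimate gives differentiability. With that supplement the argument is complete.
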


For results on the modulus of continuity, see \cite{Allaart09JMSJ,Gamkrelidze90,Kono87ActaMathHungar}.

In this paper, we investigate the rate of convergence for Takagi class functions:
What is the magnitude of $f(x)-f_N(x)$? Let us begin with a simple observation.
The tent-map $\varphi(x)$ has two fixed points $x=0,\dfrac{2}{3}$ in $[0,1]$.
\begin{itemize}
\item[$\bullet$] For each dyadic rational $x=\dfrac{k}{2^m}$, we have $\varphi^{(n)}(x)=0$ for all $n>m$, and
$f(x)-f_N(x)=0$ for all $N>m$.
\item[$\bullet$]  Since $\varphi^{(n)}\left(\dfrac{2}{3}\right)=\dfrac{2}{3}$ for all $n$, we have
\[ f\left(\dfrac{2}{3}\right)-f_N\left(\dfrac{2}{3}\right) = \dfrac{2}{3}\sum_{n=N}^{\infty} c_n\quad \mbox{for each $N$}. \]
\end{itemize}
Our main result, summarized in the following theorem, shows that those points are rather `exceptional', and describes the magnitude of $f(x)-f_N(x)$ for `typical' $x$.

\begin{theorem} \label{thm:OT19MainResult} Assume that $\{c_n\} \in \ell^1$, and $\sum_{n=N}^{\infty} (c_n)^2 >0$ for any $N$. In (i) and (ii) below, we assume $\sum_{n=N}^{\infty} c_n \neq 0$ for any $N$ in addition. 
\begin{itemize} 
\item[(i)] (the $L^2$-weak law of large numbers for the ratio) 
\[ \lim_{N \to \infty} \int_0^1 \left(\dfrac{f(x)-f_N(x)}{\tfrac{1}{2} \sum_{n=N}^{\infty} c_n}-1\right)^2 \,dx  = 0 \]
holds if and only if 
\begin{equation}
\lim_{N \to \infty} \dfrac{\sum_{n=N}^{\infty} (c_n)^2}{\left(\sum_{n=N}^{\infty} c_n\right)^2}=0. \label{eq:L2LLNcond}
\end{equation}
Under \eqref{eq:L2LLNcond}, by Chebyshev's inequality,  
\[ \lim_{N \to \infty} P\left( \left\{ x \in [0,1] : 1-\varepsilon  \leq \dfrac{f(x)-f_N(x)}{\tfrac{1}{2}\sum_{n=N}^{\infty} c_n} \leq 1+\varepsilon \right\} \right) = 1\]
for any $\varepsilon>0$.
\item[(ii)] (the strong law of large numbers for the ratio) If 
\begin{equation}
\sum_{N=1}^{\infty} \exp\left\{-K \cdot \dfrac{\left(\sum_{n=N}^{\infty} c_n \right)^2}{\sum_{n=N}^{\infty} (c_n)^2}\right\}
<+\infty  \label{eq:sLLNcond}
\end{equation}
holds for any $K>0$, then
\[ \lim_{N \to \infty} \dfrac{f(x)-f_N(x)}{\tfrac{1}{2} \sum_{n=N}^{\infty} c_n}  = 1 \quad \mbox{for a.e. $x$.} \]
\item[(iii)] (the central limit theorem) If 
\begin{equation}
\lim_{N \to \infty} \dfrac{(c_N)^2}{ \sum_{n=N}^{\infty} (c_n)^2}=0,\label{eq:HallHeyde80Cor3.4-1suff1forULIL}
\end{equation}
then
\begin{align*}
& \lim_{N \to \infty} P\left( \left\{ x \in [0,1] : \dfrac{f(x)-f_N(x)-\tfrac{1}{2} \sum_{n=N}^{\infty} c_n}{\sqrt{\tfrac{1}{12} \sum_{n=N}^{\infty} (c_n)^2}} \leq u \right\} \right) \\
&= \int_{-\infty}^u \dfrac{1}{\sqrt{2\pi}} e^{-t^2/2} \,dt
\end{align*}
for any $u \in \mathbb{R}$. 
\item[(iv)] (the law of iterated logarithm) Let $\phi(t):=\sqrt{2t \log \log (1/t)}$. If \eqref{eq:HallHeyde80Cor3.4-1suff1forULIL} holds, then
\[ \limsup_{N \to \infty} \dfrac{f(x)-f_N(x)-\tfrac{1}{2} \sum_{n=N}^{\infty} c_n}{\phi(\tfrac{1}{12} \sum_{n=N}^{\infty} (c_n)^2)} \leq 1 \quad \mbox{for a.e. $x$.} \]
If \eqref{eq:HallHeyde80Cor3.4-1suff1forULIL} is strengthened to \begin{align}
&\sum_{N=1}^{\infty}\dfrac{(c_N)^4}{\left\{ \sum_{n=N}^{\infty} (c_n)^2 \right\}^2} <+\infty, \label{eq:TakagiClassLIL190310-2}
\end{align}
then we have
\[ \limsup_{N \to \infty} \pm \dfrac{f(x)-f_N(x)-\tfrac{1}{2} \sum_{n=N}^{\infty} c_n}{\phi(\tfrac{1}{12} \sum_{n=N}^{\infty} (c_n)^2)} =1 \quad \mbox{for a.e. $x$.}\]
\end{itemize}
\end{theorem}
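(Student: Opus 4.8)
The plan is to trade the deterministic dynamics of the tent map for a probabilistic structure by expanding $x$ in binary. Writing $x=0.a_1a_2\cdots$ in binary and $r_n:=1-2a_n$, the digits $a_n$ (equivalently the Rademacher variables $r_n$) are i.i.d. under $P$, and a direct computation from the definition $\varphi^{(n)}(x)=\varphi(2^{n-1}x)$ gives the key identity $\varphi^{(n)}(x)-\tfrac12=r_n(Z_n-\tfrac12)$, where $Z_n:=\{2^nx\}$ is uniform on $[0,1]$ and independent of $r_1,\dots,r_n$. From this I read off $E[\varphi^{(n)}]=\tfrac12$ and $\mathrm{Var}(\varphi^{(n)})=\tfrac1{12}$, which explain the centering $\tfrac12\sum_{n\ge N}c_n$ and the scale $\tfrac1{12}\sum_{n\ge N}c_n^2$ appearing throughout. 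Setting $\xi_n:=c_n(\varphi^{(n)}-\tfrac12)=c_nr_n(Z_n-\tfrac12)$ and $W_N:=f(x)-f_N(x)-\tfrac12\sum_{n\ge N}c_n=\sum_{n\ge N}\xi_n$, the crucial observation is that, with respect to the decreasing filtration $\mathcal G_N:=\sigma(r_N,r_{N+1},\dots)$, one has $E[\xi_n\mid\mathcal G_{n+1}]=c_n(Z_n-\tfrac12)E[r_n]=0$, so $\{W_N,\mathcal G_N\}$ is a reverse martingale with orthogonal increments, $\mathrm{Var}(W_N)=s_N^2:=\tfrac1{12}\sum_{n\ge N}c_n^2$, and random conditional variances $E[\xi_n^2\mid\mathcal G_{n+1}]=c_n^2(Z_n-\tfrac12)^2$. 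All four parts then follow from limit theorems for this martingale.

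Part (i) is immediate from orthogonality: the displayed $L^2$ quantity equals $E[W_N^2]/(\tfrac12\sum_{n\ge N}c_n)^2=\tfrac13\,\sum_{n\ge N}c_n^2/(\sum_{n\ge N}c_n)^2$, which tends to $0$ exactly under \eqref{eq:L2LLNcond}, and Chebyshev then gives the stated convergence in probability. For part (ii) I would use that the increments are bounded, $|\xi_n|\le|c_n|/2$, so the Hoeffding--Azuma inequality applied to $\{W_N\}$ yields $P(|W_N|\ge t)\le 2\exp(-2t^2/\sum_{n\ge N}c_n^2)$; taking $t=\tfrac{\varepsilon}{2}\sum_{n\ge N}c_n$ turns this into $2\exp\{-\tfrac{\varepsilon^2}{2}(\sum_{n\ge N}c_n)^2/\sum_{n\ge N}c_n^2\}$, and summability \eqref{eq:sLLNcond} for every $K>0$ (in particular $K=\varepsilon^2/2$) lets Borel--Cantelli deliver the a.e. convergence of the ratio to $1$.

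For the central limit theorem (iii) I would invoke the martingale CLT (Hall and Heyde) for the array $\{\xi_n/s_N\}_{n\ge N}$ as $N\to\infty$. Two hypotheses must be checked. First, the normalized conditional variance $s_N^{-2}\sum_{n\ge N}E[\xi_n^2\mid\mathcal G_{n+1}]=\sum_{n\ge N}c_n^2(Z_n-\tfrac12)^2/(\tfrac1{12}\sum_{n\ge N}c_n^2)$ must converge to $1$ in probability; I would prove this by an $L^2$ estimate, bounding the variance of the numerator through $\mathrm{Cov}((Z_m-\tfrac12)^2,(Z_n-\tfrac12)^2)=O(2^{-|m-n|})$ (a consequence of $Z_n=B+2^{-(m-n)}Z_m$ with $B$ independent of $Z_m$), which reduces the problem to showing $\max_{m\ge N}c_m^2/\sum_{n\ge N}c_n^2\to0$ --- and this follows from \eqref{eq:HallHeyde80Cor3.4-1suff1forULIL} since $\sum_{n\ge N}c_n^2$ is nonincreasing in $N$. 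Second, a conditional Lindeberg (negligibility) condition holds because the increments are bounded by $|c_n|/2$ and the same maximal-ratio bound makes the largest increment asymptotically negligible relative to $s_N$.

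Part (iv) is the hardest, and I would base it on the martingale law of the iterated logarithm of Hall and Heyde (their Corollary 3.4). The upper half $\limsup\le1$ should follow under the negligibility condition \eqref{eq:HallHeyde80Cor3.4-1suff1forULIL}, using the in-probability convergence of the conditional variance established in (iii). The two-sided conclusion requires upgrading that convergence to an almost sure one and verifying the remaining hypothesis of the LIL, and this is exactly where \eqref{eq:TakagiClassLIL190310-2} enters: since $E[\xi_N^4]=c_N^4\,E[(Z_N-\tfrac12)^4]\asymp c_N^4$ and $s_N^4\asymp(\sum_{n\ge N}c_n^2)^2$, condition \eqref{eq:TakagiClassLIL190310-2} is, up to constants, the fourth-moment ratio bound $\sum_N E[\xi_N^4]/s_N^4<\infty$, which controls the fluctuations of the random conditional variance $\sum_{n\ge N}c_n^2(Z_n-\tfrac12)^2$ and drives it to its mean almost surely. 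I expect the main obstacle to be precisely this control of the \emph{random} conditional variance: unlike a sum of independent uniforms, here $E[\xi_n^2\mid\mathcal G_{n+1}]=c_n^2(Z_n-\tfrac12)^2$ is genuinely random, so both the CLT and the sharp lower half of the LIL hinge on quantifying how fast these weighted averages of $(Z_n-\tfrac12)^2$ concentrate, with \eqref{eq:TakagiClassLIL190310-2} supplying the summable fourth-moment input that the iterated-logarithm lower bound demands.
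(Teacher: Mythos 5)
Your representation $\varphi^{(n)}(x)-\tfrac12=r_n(Z_n-\tfrac12)$ is exactly K\^ono's Lemma~1 in disguise, and your treatment of (i), (ii), (iii) and the two--sided half of (iv) runs along essentially the same lines as the paper: orthogonality of the reverse--martingale differences for (i); an Azuma--Hoeffding bound for the tail sum plus Borel--Cantelli for (ii) (the paper records the needed tail--sum version of Azuma's inequality as a separate lemma, and gets the constant $\varepsilon^2/8$ rather than your $\varepsilon^2/2$, which is immaterial since \eqref{eq:sLLNcond} is assumed for every $K$); the Hall--Heyde reverse--martingale CLT for (iii), with the conditional variance controlled in $L^2$ via the covariance $\mathrm{Cov}\bigl((Z_i-\tfrac12)^2,(Z_j-\tfrac12)^2\bigr)$, which the paper computes exactly as $\tfrac{1}{180}4^{-|i-j|}$ (your $O(2^{-|i-j|})$ suffices); and for the $\pm$ statement, the summable fourth--moment condition \eqref{eq:TakagiClassLIL190310-2} upgrading $Q_N^2/s_N^2\to1$ to almost--sure convergence, which the paper feeds into a reverse--martingale LIL of Scott and Huggins via Hall--Heyde's Theorem~2.15 and a tail version of Kronecker's lemma.

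The one genuine gap is the upper half of (iv). You assert that $\limsup\le 1$ follows from a martingale LIL ``using the in--probability convergence of the conditional variance established in (iii).'' Convergence in probability of $Q_N^2/s_N^2$ cannot yield an almost--sure $\limsup$ bound: the LILs with random norming require $Q_N^2/s_N^2\to1$ \emph{almost surely}, and you only obtain that under the stronger hypothesis \eqref{eq:TakagiClassLIL190310-2}, whereas the upper bound is claimed under \eqref{eq:HallHeyde80Cor3.4-1suff1forULIL} alone. (Also, Hall--Heyde's Corollary~3.4 is the CLT, not an LIL.) The paper instead bypasses the conditional variance entirely for the upper half: it proves a maximal exponential inequality $E[\exp(\lambda\max_{a\le N\le b}|M_N|)]\le 8\exp\{\tfrac{\lambda^2}{2}\sum_{n\ge a}(c_n)^2\}$ (the tail--sum analogue of Azuma's Lemma~2) and runs a blocking argument over the geometric subsequence $N_k=\min\{N: s_N^{-2}>p^k\}$; here \eqref{eq:HallHeyde80Cor3.4-1suff1forULIL} is used precisely to guarantee $s_{N+1}^2/s_N^2\to1$, so that $s_{N_{k+1}}^2/s_{N_k}^2\to 1/p$ and the resulting tail probabilities are $O(k^{-(1+\varepsilon/2)})$, hence summable. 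You already have the one--point Azuma bound from part (ii), but the maximal inequality over blocks and the subsequence construction are the missing ingredients; without them your argument for the first half of (iv) does not go through.
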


\begin{remark} As a corollary of the first half of Theorem \ref{thm:OT19MainResult} (iv), another strong law of large numbers is obtained under a mild condition \eqref{eq:HallHeyde80Cor3.4-1suff1forULIL}: 
\[ \lim_{N \to \infty} \dfrac{f(x)-f_N(x)-\tfrac{1}{2} \sum_{n=N}^{\infty} c_n}{\sum_{n=N}^{\infty} (c_n)^2} =0\quad \mbox{for a.e. $x$.}\] 
\end{remark}

\begin{example} $c_n=n^{-\alpha}$ ($\alpha>1$). We use the fact
\[ \sum_{n=N}^{\infty} n^{-\alpha} \sim \dfrac{1}{\alpha-1} N^{1-\alpha} \quad \mbox{as $N \to \infty$}, \]
where $a_n \sim b_n$ means that $a_n/b_n$ converges to $1$. For $p=1,2,\cdots$, we have
\[  \dfrac{\sum_{n=N}^{\infty} (c_n)^{2p}}{\left(\sum_{n=N}^{\infty} (c_n)^p \right)^2}
 \sim \dfrac{(2p\alpha-1)^{-1} N^{1-2p\alpha}}{(p\alpha-1)^{-2} N^{2(1-p\alpha)}} =\dfrac{(p\alpha-1)^2}{2p\alpha-1} \cdot \dfrac{1}{N} \quad \mbox{as $N \to \infty$}. \]
Since $\sup_{n \geq N} (c_n)^2=(c_N)^2=N^{-2\alpha}$, all the assumptions in Theorem \ref{thm:OT19MainResult} are satisfied.
Noting that 
\begin{align*}
\frac{\sqrt{\tfrac{1}{12} \sum_{n=N}^{\infty} (c_n)^2}}{\tfrac{1}{2} \sum_{n=N}^{\infty} c_n} &\sim \dfrac{\sqrt{\frac{1}{12}\cdot (2\alpha-1)^{-1} N^{1-2\alpha}}}{\frac{1}{2}\cdot (\alpha-1)^{-1}N^{1-\alpha}} \\
&= \dfrac{\alpha-1}{\sqrt{3(2\alpha-1)}} \cdot \dfrac{1}{\sqrt{N}}\quad \mbox{as $N \to \infty$},
\end{align*}
we have the following corollary of Theorem \ref{thm:OT19MainResult} (iii): For any $u \in \mathbb{R}$,
\begin{align*}
 &\lim_{N \to \infty} P\left( \left\{ x \in [0,1] : \left. \left(\frac{f-f_N}{\tfrac{1}{2} \sum_{n=N}^{\infty} c_n}-1\right) \right/ \left(\frac{\alpha-1}{\sqrt{3(2\alpha-1)}} \cdot \frac{1}{\sqrt{N}}\right) \leq u \right\} \right) \\
 &= \int_{-\infty}^u \dfrac{1}{\sqrt{2\pi}} e^{-t^2/2} \,dt.
\end{align*}
\end{example}

\begin{example} $c_n=e^{-K n^{\beta}}$ ($K>0$, $0<\beta<1$). Here we use the following (see  Appendix):
\begin{align}
\sum_{n=N}^{\infty} e^{-K n^{\beta}}\asymp N^{1-\beta} e^{-KN^{\beta}}, \label{ineq:subexpasymp} 
\end{align}
where $a_n \asymp b_n$ means that there are positive constants $C_1$ and $C_2$ such that
\[ C_1 b_n \leq a_n \leq C_2 b_n\quad \mbox{for all $n$.} \]
For $p=1,2,\cdots$, we have
\begin{align*}
\dfrac{\sum_{n=N}^{\infty} (c_n)^{2p}}{\left(\sum_{n=N}^{\infty} (c_n)^p \right)^2} &\asymp \dfrac{N^{1-\beta} e^{-2pKN^{\beta}}}{N^{2(1-\beta)} e^{-2pKN^{\beta}}} = \dfrac{1}{N^{1-\beta} }.
\end{align*}
Noting that
\begin{align*}
\dfrac{\sup_{n \geq N}(c_n)^2}{\sum_{n=N}^{\infty} (c_n)^2} 
\leq \dfrac{e^{-2K N^{\beta}}}{C_1 N^{1-\beta}e^{-2K N^{\beta}}} \to 0 \quad \mbox{as $N \to \infty$},
\end{align*}
we see that the conditions in Theorem \ref{thm:OT19MainResult}, except \eqref{eq:TakagiClassLIL190310-2}, are satisfied. Note that \eqref{eq:TakagiClassLIL190310-2} holds only when $0<\beta<1/2$. At this point we do not know this gap can be filled.
\end{example}

\begin{remark} By the above calculations, we can see the following conditions are sufficient to apply Theorem \ref{thm:OT19MainResult}: $|c_n| \asymp n^{-\alpha}$ with $\alpha>1$, or $|c_n| \asymp e^{-K n^{\beta}}$ with $K>0$ and $0<\beta<1$.
\end{remark}

\begin{example} \label{exs:ExpDame} Suppose that $K>0$ and $\beta \geq 1$. Since $\frac{1}{\beta}-1 \leq 0$, for any $a>0$ satisfying $Ka^{\beta} \geq 1$, we have
\begin{align*}
\int_a^{\infty} e^{-K x^{\beta}}\,dx = \dfrac{1}{\beta K^{\frac{1}{\beta}}}\int_{Ka^{\beta}}^{\infty} t^{\frac{1}{\beta}-1} e^{-t}\,dt 
\leq \dfrac{1}{\beta K^{\frac{1}{\beta}}}\int_{Ka^{\beta}}^{\infty} e^{-t}\,dt 
= \dfrac{1}{\beta K^{\frac{1}{\beta}}}e^{-Ka^{\beta}}.
\end{align*}
If $c_n=e^{-K n^{\beta}}$, then
\begin{align*}
\dfrac{\sum_{n=N}^{\infty} (c_n)^2}{(\sum_{n=N}^{\infty} c_n)^2} & \geq \dfrac{(c_N)^2}{(\sum_{n=N}^{\infty} c_n)^2} \\
&\geq \dfrac{e^{-2K N^{\beta}}}{(e^{-K N^{\beta}}+\int_N^{\infty} e^{-K x^{\beta}}\,dx)^2} \geq \dfrac{1}{(1+ \beta^{-1} K^{-\frac{1}{\beta}})^2} >0
\end{align*}
for sufficiently large $N$, which shows that \eqref{eq:L2LLNcond} in Theorem \ref{thm:OT19MainResult} is not satisfied.
\end{example}

Example \ref{exs:ExpDame} shows that Theorem \ref{thm:OT19MainResult} does not cover the Takagi function $T(x)$ itself. In fact, we have the following result when $c_n=r^n$ with $0<|r|<1$.

\begin{theorem} \label{thm:OT19GansoIncluded} Let $r$ be a real number satisfying $0<|r|<1$. Define
\[ f_r(x) := \sum_{n=1}^{\infty} r^n \varphi^{(n)}(x) \]
and
\[ f_{r,N}(x) := \sum_{n=1}^{N-1} r^n \varphi^{(n)}(x) \quad \mbox{for $N=1,2,\cdots$.} \]
\begin{itemize}
\item[(i)] There is a nondegenerate random variable (i.e. a nonconstant measurable function) $L_r(x)$ with mean one such that
\[ \dfrac{f_r(x)-f_{r,N}(x)}{\tfrac{1}{2} \sum_{n=N}^{\infty} r^n} \to L_r(x) \quad \mbox{in distribution as $N \to \infty$}. \]
\item[(ii)] $\displaystyle \lim_{N' \to \infty} \dfrac{1}{N'} \sum_{N=1}^{N'} \dfrac{f_r(x)-f_{r,N}(x)}{\tfrac{1}{2}\sum_{n=N}^{\infty} r^n} = 1$ for a.e. $x$ and in $L^1$. 
\end{itemize}
\end{theorem}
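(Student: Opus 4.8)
The plan is to reduce both statements to the ergodic theory of the doubling map $T(x):=2x \bmod 1$ on $(\Omega,\mathcal{F},P)$. The starting point is the self-similar structure of the iterates: since $\varphi^{(n)}(x)=\varphi(2^{n-1}x)$ and $\varphi$ has period one, one has $\varphi^{(N+k)}(x)=\varphi^{(k+1)}(2^{N-1}x)$ for every $k\ge 0$. Summing the geometric series $\sum_{n\ge N}r^n=r^N/(1-r)$ and substituting $n=N+k$, I would compute
\[
\frac{f_r(x)-f_{r,N}(x)}{\tfrac12\sum_{n=N}^{\infty}r^n}
=\frac{2(1-r)}{r^N}\sum_{k=0}^{\infty}r^{N+k}\varphi^{(k+1)}(2^{N-1}x)
=\frac{2(1-r)}{r}\,f_r(2^{N-1}x).
\]
Writing $g_r:=\tfrac{2(1-r)}{r}f_r$ and denoting the left-hand side by $R_N(x)$, this identity reads $R_N=g_r\circ T^{N-1}$, where I have used that $g_r$ has period one. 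Note that $g_r$ is continuous (hence bounded and in every $L^p(P)$) and has mean $\int_0^1 g_r\,dP=\tfrac{2(1-r)}{r}\sum_{n\ge1}r^n\cdot\tfrac12=1$, using $\int_0^1\varphi^{(n)}\,dP=\tfrac12$ for each $n$.

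For part (i), since $T$ preserves $P$ and $g_r$ is $1$-periodic, the pushforward identity $(g_r\circ T^{N-1})_{*}P=(g_r)_{*}P$ shows that $R_N$ has exactly the same law as $g_r$ for \emph{every} $N$. Hence the sequence of laws is constant and a fortiori convergent, and I would take $L_r:=g_r$, which has mean one by the computation above. It remains to verify nondegeneracy: as $g_r$ is a nonzero multiple of the continuous function $f_r$, and $f_r(0)=0\neq \tfrac{2r}{3(1-r)}=f_r(2/3)$ (from $\varphi^{(n)}(0)=0$ and $\varphi^{(n)}(2/3)=2/3$), the function $f_r$ is not $P$-a.e.\ constant, so $L_r$ is a nonconstant measurable function.

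For part (ii), the Cesàro average is literally a Birkhoff average of $g_r$ along the orbit of $T$:
\[
\frac{1}{N'}\sum_{N=1}^{N'}\frac{f_r(x)-f_{r,N}(x)}{\tfrac12\sum_{n=N}^{\infty}r^n}
=\frac{1}{N'}\sum_{j=0}^{N'-1}g_r(T^j x).
\]
The doubling map is ergodic with respect to Lebesgue measure and $g_r\in L^1(P)$, so Birkhoff's ergodic theorem gives convergence of this average to $\int_0^1 g_r\,dP=1$ for a.e.\ $x$ and in $L^1$, which is precisely the assertion.

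I expect the only genuinely delicate point to be the algebraic reduction in the first paragraph: one must track the index shift and the factor $r$ carefully, check that the argument uses only $|r|<1$ (so that it goes through verbatim for negative $r$), and confirm that the dyadic rationals — the $P$-null set on which $T$ and the binary expansion misbehave — may be discarded. Once the identity $R_N=g_r\circ T^{N-1}$ is established, both parts follow from standard facts: measure-preservation of $T$ for (i), and ergodicity of the doubling map together with the Birkhoff ergodic theorem for (ii).
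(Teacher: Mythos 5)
Your proposal is correct and follows essentially the same route as the paper: the identity $R_N=\frac{2(1-r)}{r}f_r(2^{N-1}x)$ is exactly the paper's Lemma \ref{lem:rTakagiSelfSimilar}, and parts (i) and (ii) are then deduced, as in the paper, from measure-preservation and ergodicity of the doubling map via the Birkhoff (and mean) ergodic theorem. Your explicit verification that $L_r$ is nonconstant (via $f_r(0)\neq f_r(2/3)$ and continuity) is a small detail the paper leaves implicit.
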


\begin{remark} The limiting random variable $L_r(x)$ in Theorem \ref{thm:OT19GansoIncluded} (i) is nothing but $\dfrac{2(1-r)}{r} \cdot f_r(x)$.
\end{remark}

\begin{example} It is well-known (see e.g. \cite{YamagutiHata83HMJ}) that $f_{\frac{1}{4}}(x) = x(1-x)$. In this case
\[ L_{\frac{1}{4}} (x) = 6 f_{\frac{1}{4}}(x) =6x(1-x),\]
and for $0 \leq u \leq 6 \cdot (1/4)=3/2$, we have
\begin{align*}
&P(\{ x \in [0,1] :  L_{\frac{1}{4}} (x) \leq u\}) 
= 1- \sqrt{1- \dfrac{2u}{3}} = \int_0^u \dfrac{1}{\sqrt{9-6t}}\,dt.
\end{align*}

\end{example}

\section{Proof of Theorem \ref{thm:OT19MainResult}}

\subsection{Preliminaries}

We follow the probabilistic approach to Takagi class functions pioneered by K\^{o}no \cite{Kono87ActaMathHungar}. It is easy to see that the Lebesgue measure $P$ on $[0,1]$ is $\varphi$-invariant, which means that for each $n=1,2,\cdots$, the random variable $\varphi^{(n)}$ is uniformly distributed over $[0,1]$. Now we introduce 
\[ \varphi_*^{(n)}(x) := \varphi^{(n)}(x) - \dfrac{1}{2}\quad \mbox{for $n=1,2,\cdots$.} \]
Since $\varphi_*^{(n)}$ is uniformly distributed over $[-1/2,1/2]$, we can see that
\[ E[\varphi_*^{(n)}]=0,\quad 
E[(\varphi_*^{(n)})^2]=\dfrac{1}{12},\quad \mbox{and} \quad E[(\varphi_*^{(n)})^4]=\dfrac{1}{80}, \]
where $E[X]$ denotes the expectation of $X$ with respect to $P$: 
\[ E[X] := \int_0^1 X(x)\,dx. \]

The binary expansion of $x \in [0,1)$ is denoted by 
\[ \displaystyle x=\sum_{n=1}^{\infty} \dfrac{\varepsilon_n(x)}{2^n}.  \]
(If $x$ is a dyadic rational, then we choose the representation with $\varepsilon_n (x)=0$ except finitely many $n$'s.) We define {\it Rademacher functions} $\{R_n(x)\}$ by
\[ R_n(x) := 1-2\varepsilon_n(x)\quad \mbox{for $n=1,2,\cdots$.} \]

\begin{lemma}[\cite{Kono87ActaMathHungar}, Lemma 1] \label{lem:Kono87Lem1} For $n=1,2,\cdots$,
\[ \varphi_*^{(n)}(x) = -2^{n-1} R_n(x) \sum_{k=n+1}^{\infty}\dfrac{R_k(x)}{2^k}. \]
\end{lemma}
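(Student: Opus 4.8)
The plan is to evaluate both sides of the identity in terms of the binary digits $\{\varepsilon_k(x)\}$ and to check that they coincide. First I would fix $n$ and locate the fractional part of $2^{n-1}x$: since $\varphi^{(n)}(x)=\varphi(2^{n-1}x)$ and $\varphi$ depends only on the fractional part of its argument, writing $x=\sum_{k\ge 1}\varepsilon_k(x)2^{-k}$ gives
\[
\{2^{n-1}x\}=\sum_{j=0}^{\infty}\frac{\varepsilon_{n+j}(x)}{2^{j+1}}=\frac{\varepsilon_n(x)}{2}+\frac{w}{2},\qquad w:=\{2^n x\}=\sum_{k=n+1}^{\infty}\frac{\varepsilon_k(x)}{2^{k-n}}.
\]
In particular the leading digit $\varepsilon_n(x)$ decides which branch of the tent map is in force.

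Next I would compute $\varphi^{(n)}(x)$ in the two cases. If $\varepsilon_n(x)=0$ then $\{2^{n-1}x\}=w/2\in[0,1/2]$, so $\varphi^{(n)}(x)=2\cdot(w/2)=w$; if $\varepsilon_n(x)=1$ then $\{2^{n-1}x\}=(1+w)/2\in[1/2,1]$, so $\varphi^{(n)}(x)=2\bigl(1-(1+w)/2\bigr)=1-w$. Subtracting $1/2$, both cases fuse into
\[
\varphi_*^{(n)}(x)=R_n(x)\left(w-\tfrac12\right),
\]
since $R_n(x)=+1$ when $\varepsilon_n(x)=0$ and $R_n(x)=-1$ when $\varepsilon_n(x)=1$. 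It then remains to bring the right-hand side of the lemma to the same form: substituting $R_k=1-2\varepsilon_k$ and using $\sum_{k=n+1}^{\infty}2^{-k}=2^{-n}$ together with $\sum_{k=n+1}^{\infty}\varepsilon_k(x)2^{-k}=2^{-n}w$ yields
\[
\sum_{k=n+1}^{\infty}\frac{R_k(x)}{2^k}=\frac{1-2w}{2^n},
\]
whence $-2^{n-1}R_n(x)\sum_{k=n+1}^{\infty}R_k(x)2^{-k}=R_n(x)\bigl(w-\tfrac12\bigr)$, which is exactly the expression obtained for $\varphi_*^{(n)}(x)$ above.

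The only point requiring care is the ambiguity of the binary expansion at dyadic rationals; under the stated convention ($\varepsilon_k(x)=0$ for all large $k$) one verifies directly that for such $x$ and all large $n$ both sides reduce to $-\tfrac12$, so no difficulty arises there. I expect the genuine content of the argument to be precisely the bookkeeping in the middle paragraph that collapses the two tent-map branches into the single Rademacher sign $R_n(x)$; the conversion between $\{\varepsilon_k\}$ and $\{R_k\}$ and the geometric summation are routine.
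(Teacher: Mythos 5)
Your verification is correct and complete. The paper itself gives no proof of this identity --- it is quoted directly from K\^{o}no's Lemma 1 --- so there is no in-paper argument to compare against; your digit-by-digit computation, which locates $\{2^{n-1}x\}=\tfrac{\varepsilon_n(x)}{2}+\tfrac{w}{2}$, collapses the two tent-map branches into the single sign $R_n(x)$, and then rewrites the Rademacher tail sum as $2^{-n}(1-2w)$, is exactly the standard elementary derivation, and your remark on the terminating-expansion convention at dyadic rationals is the right point to check.
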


For $N=1,2,\cdots$, let
\[ \mathcal{T}_N := \sigma(\{ R_n : n =N,N+1,\cdots \}). \]
Then $\{\mathcal{T}_N\}$ is a decreasing family of sub $\sigma$-fields of $\mathcal{F}$. 
For each $N=1,2,\cdots$, we define 
\[ 
\displaystyle m_N := E[f-f_N]=\dfrac{1}{2}\sum_{n=N}^{\infty} c_n\]
and
\[ M_N(x):=\sum_{n=N}^{\infty} c_n\varphi_*^{(n)}(x)=f(x)-f_N(x)-m_N. \]
The following fact, which follows from Lemma \ref{lem:Kono87Lem1}, is the starting point of our calculation. (Apparently it plays no major role in \cite{Kono87ActaMathHungar}.)

\begin{lemma}[\cite{Kono87ActaMathHungar}, Theorem 1 (vi)] \label{lem:Kono87Thm1vi} $(M_N,\mathcal{T}_N)$ is a reverse martingale.
\end{lemma}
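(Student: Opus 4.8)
The plan is to verify the two defining properties of a reverse martingale relative to the decreasing family $\{\mathcal{T}_N\}$: that $M_N$ is $\mathcal{T}_N$-measurable and integrable, and that $E[M_N \mid \mathcal{T}_{N+1}] = M_{N+1}$. The entire argument rests on Lemma \ref{lem:Kono87Lem1}, which exhibits each $\varphi_*^{(n)}$ as $R_n$ multiplied by a factor built only out of Rademacher functions of strictly larger index.

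First I would record integrability and measurability. Since $|\varphi_*^{(n)}| \le 1/2$ and $\{c_n\} \in \ell^1$, the series $M_N = \sum_{n=N}^{\infty} c_n \varphi_*^{(n)}$ converges uniformly and satisfies $|M_N| \le \tfrac{1}{2}\sum_{n=N}^{\infty} |c_n| < +\infty$, so $M_N \in L^1$. By Lemma \ref{lem:Kono87Lem1}, $\varphi_*^{(n)}$ is a function of $R_n, R_{n+1}, \dots$; for every $n \ge N$ these indices all lie in $\{N, N+1, \dots\}$, so each $\varphi_*^{(n)}$, and hence $M_N$, is $\mathcal{T}_N$-measurable.

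The heart of the matter is the reverse martingale identity. I would split off the first term, writing $M_N = c_N \varphi_*^{(N)} + M_{N+1}$, where $M_{N+1}$ is already $\mathcal{T}_{N+1}$-measurable; thus it suffices to show $E[\varphi_*^{(N)} \mid \mathcal{T}_{N+1}] = 0$. Lemma \ref{lem:Kono87Lem1} gives $\varphi_*^{(N)} = -2^{N-1} R_N \sum_{k=N+1}^{\infty} 2^{-k} R_k$, in which the series factor is $\mathcal{T}_{N+1}$-measurable while $R_N$ is independent of $\mathcal{T}_{N+1} = \sigma(R_{N+1}, R_{N+2}, \dots)$, because the Rademacher functions are i.i.d. under Lebesgue measure. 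Pulling the $\mathcal{T}_{N+1}$-measurable factor out of the conditional expectation and using $E[R_N \mid \mathcal{T}_{N+1}] = E[R_N] = 0$ yields $E[\varphi_*^{(N)} \mid \mathcal{T}_{N+1}] = 0$, and therefore $E[M_N \mid \mathcal{T}_{N+1}] = M_{N+1}$.

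The only point requiring care --- and the place where Lemma \ref{lem:Kono87Lem1} does the real work --- is the clean separation of $\varphi_*^{(N)}$ into the independent mean-zero factor $R_N$ and a $\mathcal{T}_{N+1}$-measurable remainder; once this structure is in hand the martingale property is immediate. I expect no serious obstacle beyond justifying that the Rademacher functions are genuinely independent (the standard consequence of binary digits being i.i.d. fair coin flips under Lebesgue measure) and that conditional expectation may be interchanged with the convergent series defining $M_N$, which follows from the uniform bound $|c_n \varphi_*^{(n)}| \le \tfrac{1}{2}|c_n|$ together with dominated convergence.
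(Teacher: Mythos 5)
Your proof is correct and follows essentially the route the paper intends: the paper states the lemma as a quoted fact from K\^{o}no that ``follows from Lemma \ref{lem:Kono87Lem1}'', and immediately afterwards uses exactly your key step, namely that $d_N = M_N - M_{N+1} = c_N\varphi_*^{(N)}$ satisfies $E[d_N \mid \mathcal{T}_{N+1}]=0$ because $R_N$ is independent of $\mathcal{T}_{N+1}$ while the remaining factor is $\mathcal{T}_{N+1}$-measurable. The only superfluous worry is the interchange of conditional expectation with the series: since $M_{N+1}$ is itself a bounded $\mathcal{T}_{N+1}$-measurable random variable, the decomposition $M_N = c_N\varphi_*^{(N)} + M_{N+1}$ needs only linearity over two terms.
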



For $n=1,2,\cdots$, let $d_n:=M_n - M_{n+1}=c_n\varphi_*^{(n)}$.
Since $E[d_n \mid\mathcal{T}_{n+1}]=0$, we have 
\begin{align*}
 E[ d_{n_1} d_{n_2} \cdots  d_{n_r}] &= E[ E[d_{n_1}\mid \mathcal{T}_{n_1+1}]d_{n_2} \cdots  d_{n_r}]=0
\end{align*}
for any integer $r>1$ and $n_1<n_2<\cdots<n_r$. In fact, 
$\{\varphi_*^{(n)}\}$ forms a multiplicative system (\cite{Kono87ActaMathHungar}, Theorem 1 (i)): For any integer $r>1$ and $n_1<n_2<\cdots<n_r$,
\[ E[ \varphi_*^{(n_1)} \varphi_*^{(n_2)}\cdots  \varphi_*^{(n_r)}] = 0.\]

\subsection{Law of Large numbers}

By the orthogonality of reverse martingale differences $\{d_n\}$, we have
\begin{align*}
 s_N^2:=E[(M_N)^2] &= E\left[ \left(\sum_{n=N}^{\infty} d_n\right)^2 \right] \\
 &= \sum_{n=N}^{\infty} E[(d_n)^2]=
\sum_{n=N}^{\infty} (c_n)^2 E[ (\varphi_*^{(n)})^2 ] = \dfrac{1}{12} \sum_{n=N}^{\infty} (c_n)^2.
\end{align*}
Theorem \ref {thm:OT19MainResult} (i) follows from
\[ E\left[ \left(\dfrac{f-f_N}{m_N} -1 \right)^2\right] = E\left[\left(\dfrac{M_N}{m_N}\right)^2\right] = \dfrac{E[(M_N)^2]}{(m_N)^2} = \dfrac{ \tfrac{1}{12}\sum_{n=N}^{\infty} (c_n)^2}{\left(\tfrac{1}{2}\sum_{n=N}^{\infty} c_n\right)^2}. \]

To prove Theorem \ref {thm:OT19MainResult} (ii), we use a tail sum analog of Lemma 1 in Azuma \cite{Azuma67}, whose proof is quite similar to the original one and is omitted.
 
\begin{lemma} \label{lem:Azuma67Lem1Tail} For any $\lambda \in \mathbb{R}$, 
\[ E[e^{\lambda |M_N|}] \leq 2 \exp\left\{\dfrac{\lambda^2}{2} \sum_{n=N}^{\infty} (c_n)^2\right\} . \]
\end{lemma}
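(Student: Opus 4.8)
The plan is to reduce the two-sided estimate on $E[e^{\lambda|M_N|}]$ to one-sided moment generating function bounds and then exploit the reverse martingale structure of Lemma~\ref{lem:Kono87Thm1vi} to peel off the differences $d_n=c_n\varphi_*^{(n)}$ one at a time, exactly as in the classical Azuma--Hoeffding argument but run along the \emph{decreasing} filtration $\{\mathcal{T}_N\}$. Since $e^{\lambda|M_N|}\le e^{\lambda M_N}+e^{-\lambda M_N}$ pointwise and the target bound is even in $\lambda$, it suffices to prove
\[ E[e^{\lambda M_N}] \le \exp\left(\frac{\lambda^2}{2}\sum_{n=N}^{\infty}(c_n)^2\right)\quad\text{for every }\lambda\in\mathbb{R}; \]
adding the estimate for $-\lambda$ then produces the factor $2$.

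First I would work with the finite tail sums $M_N^{(m)}:=\sum_{n=N}^{m}d_n$, which converge to $M_N$ uniformly in $x$ (hence a.s.), because $|M_N-M_N^{(m)}|\le\tfrac12\sum_{n=m+1}^{\infty}|c_n|\to0$ by $\{c_n\}\in\ell^1$. For fixed $m$ I condition on $\mathcal{T}_{N+1}$. By Lemma~\ref{lem:Kono87Lem1} each $\varphi_*^{(n)}$ is a function of $R_n,R_{n+1},\dots$, so $d_n$ is $\mathcal{T}_n$-measurable; since $\mathcal{T}_n\subseteq\mathcal{T}_{N+1}$ for every $n>N$, the factor $e^{\lambda(d_{N+1}+\cdots+d_m)}$ pulls out of the conditional expectation, leaving $E[e^{\lambda d_N}\mid\mathcal{T}_{N+1}]$ to be controlled. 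The key point, again from Lemma~\ref{lem:Kono87Lem1}, is that conditionally on $\mathcal{T}_{N+1}$ we may write $d_N=c_N R_N W$ with $W:=-2^{N-1}\sum_{k=N+1}^{\infty}R_k/2^k$ a $\mathcal{T}_{N+1}$-measurable factor satisfying $|W|\le\tfrac12$, while $R_N$ is an independent symmetric sign. Hence the conditional law of $d_N$ is the symmetric two-point distribution on $\{\pm c_N W\}$, and using $\cosh t\le e^{t^2/2}$,
\[ E[e^{\lambda d_N}\mid\mathcal{T}_{N+1}]=\cosh(\lambda c_N W)\le\exp\left(\frac{\lambda^2(c_N W)^2}{2}\right)\le\exp\left(\frac{\lambda^2(c_N)^2}{2}\right), \]
the last bound being the (non-tight) one matching the stated constant.

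Iterating the peeling — condition on $\mathcal{T}_{N+2}$ to strip off $d_{N+1}$, and so on — gives by induction on the number of terms $E[e^{\lambda M_N^{(m)}}]\le\exp\bigl(\tfrac{\lambda^2}{2}\sum_{n=N}^{m}(c_n)^2\bigr)\le\exp\bigl(\tfrac{\lambda^2}{2}\sum_{n=N}^{\infty}(c_n)^2\bigr)$. I then pass to the limit $m\to\infty$: since $e^{\lambda M_N^{(m)}}\to e^{\lambda M_N}$ a.s., Fatou's lemma yields $E[e^{\lambda M_N}]\le\liminf_m E[e^{\lambda M_N^{(m)}}]$, which is bounded by the same expression; combining the $\pm\lambda$ estimates finishes the proof. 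I expect the main obstacle to be organizing the conditioning in the reverse filtration correctly: unlike a forward martingale, one strips off the difference carrying the \emph{finest} $\sigma$-field $\mathcal{T}_N$ by conditioning on the coarser $\mathcal{T}_{N+1}$, and one must check that all remaining differences are $\mathcal{T}_{N+1}$-measurable so that they factor out cleanly. The only other technical point, the interchange of limit and expectation for the infinite tail sum, is handled painlessly by the uniform ($\ell^1$) convergence together with Fatou's lemma.
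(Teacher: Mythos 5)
Your proof is correct and is precisely the reverse-filtration adaptation of Azuma's Lemma 1 that the paper invokes and omits: peel off $d_N$ by conditioning on the coarser $\sigma$-field $\mathcal{T}_{N+1}$, bound the conditional moment generating function, iterate over the finite tail sums, pass to the limit, and add the $\pm\lambda$ estimates to produce the factor $2$. Your observation that, via Lemma \ref{lem:Kono87Lem1}, the conditional law of $d_N$ given $\mathcal{T}_{N+1}$ is an exact symmetric two-point law even lets you bypass Hoeffding's convexity step and would in fact yield the sharper constant $\lambda^2/8$ in place of $\lambda^2/2$.
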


By Markov's inequality and Lemma \ref{lem:Azuma67Lem1Tail} with $\lambda=c\left\{ \sum_{n=N}^{\infty} (c_n)^2\right\}^{-1}$, we obtain
\[ P(|M_N| > c) \leq 2 \exp\left[-\dfrac{c^2}{2} \left\{ \sum_{n=N}^{\infty} (c_n)^2\right\}^{-1}\right] \]
for any $c>0$. Thus we have
\begin{align*}
P\left(\left|\dfrac{M_N}{m_N}\right| > \varepsilon \right) &\leq 2 \exp\left\{-\dfrac{\varepsilon^2}{8} \cdot \dfrac{\left(\sum_{n=N}^{\infty} c_n \right)^2}{\sum_{n=N}^{\infty} (c_n)^2}\right\}
\end{align*}
for any $\varepsilon>0$.
By the assumption of Theorem \ref{thm:OT19MainResult} (ii), we have 
\[
\displaystyle \sum_{N=1}^{\infty} P\left(\left|\dfrac{M_N}{m_N}\right| > \varepsilon \right)<+\infty.
\]
A standard application of the Borel-Cantelli lemma (see e.g. Theorem 2.1.1 in \cite{Stout74}) gives Theorem \ref {thm:OT19MainResult} (ii).

\subsection{The central limit theorem}

We try to apply the following result, which is a special case of Corollary 3.4 in Hall and Heyde \cite{HallHeyde80}.

\begin{lemma}[cf. \cite{HallHeyde80}, Corollary 3.4] Suppose that $(M_N,\mathcal{T}_N)$ is a square-integrable reverse martingale. 
Let 
\[ s_N^2:=E[(M_N)^2] \quad \mbox{and}\quad Q_N^2:=\sum_{n=N}^{\infty} (d_n)^2, \]
where $d_n:=M_n-M_{n+1}$. If both
\begin{align}
 \lim_{N \to \infty} \dfrac{1}{s_N^2} E\left[ \sup_{n \geq N} (d_n)^2 \right] =0
 \label{eq:HallHeyde80Cor3.4-1}
\end{align}
and 
\begin{align}
 \lim_{N \to \infty} \dfrac{Q_N^2}{s_N^2} = 1 \mbox{ in probability}
 \label{eq:HallHeyde80Cor3.4-2}
\end{align}
hold, then 
\[ \lim_{N \to \infty} \dfrac{M_N}{\sqrt{s_N^2}} =  \lim_{N \to \infty} \dfrac{M_N}{\sqrt{Q_N^2}}=N(0,1)  \mbox{ in distribution}. \]
\end{lemma}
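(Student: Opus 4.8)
The plan is to deduce this reverse-martingale central limit theorem from the classical martingale central limit theorem for arrays (for instance, Theorem 3.2 in \cite{HallHeyde80}) by reversing the direction of time. First I would record the facts that make the reversal legitimate. Since $d_n=M_n-M_{n+1}$ satisfies $E[d_n\mid\mathcal{T}_{n+1}]=0$ and the $\{\mathcal{T}_n\}$ decrease, orthogonality gives $s_N^2=E[(M_N)^2]=\sum_{n\geq N}E[(d_n)^2]=E[Q_N^2]$, so in particular $E[Q_N^2/s_N^2]=1$. Moreover $s_N^2\to 0$, being the tail of a convergent series, whence $M_N\to 0$ in $L^2$; thus no mass escapes to infinity and $M_N=\sum_{n\geq N}d_n$ is approximated in $L^2$ by its truncations $\sum_{n=N}^{m}d_n$ with error $s_{m+1}$.

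Next I would set up the reversed array. For a fixed truncation level $m$, reversing the index via $n\mapsto m-n$ turns $(d_N,\dots,d_m)$ into a forward martingale difference sequence relative to the increasing family $\mathcal{T}_m\subseteq\mathcal{T}_{m-1}\subseteq\cdots\subseteq\mathcal{T}_N$, whose terminal sum is $\sum_{n=N}^{m}d_n$ and whose quadratic variation is $\sum_{n=N}^{m}(d_n)^2$. Normalizing by $s_N$, I would verify the two hypotheses of the forward CLT as $N\to\infty$. Condition \eqref{eq:HallHeyde80Cor3.4-2} is exactly the convergence $Q_N^2/s_N^2\to 1$ of the normalized quadratic variation demanded by the martingale CLT; combined with $E[Q_N^2/s_N^2]=1$ it also makes normalization by $s_N$ and by $Q_N$ asymptotically equivalent, which yields the self-normalized statement for $M_N/Q_N$ once the first is known. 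Condition \eqref{eq:HallHeyde80Cor3.4-1} is the negligibility of the largest increment, from which I would extract the Lindeberg condition $s_N^{-2}\sum_{n\geq N}E[(d_n)^2\mathbf{1}\{|d_n|>\varepsilon s_N\}]\to 0$ via the pointwise bound
\[ \sum_{n\geq N}(d_n)^2\mathbf{1}\{|d_n|>\varepsilon s_N\}\leq Q_N^2\,\mathbf{1}\Big\{\sup_{n\geq N}|d_n|>\varepsilon s_N\Big\}, \]
together with the uniform integrability of $\{Q_N^2/s_N^2\}$ (a nonnegative sequence converging in probability to the constant $1$ with mean $1$, hence $L^1$-convergent by Scheff\'e's lemma) and $P(\sup_{n\geq N}|d_n|>\varepsilon s_N)\leq \varepsilon^{-2}s_N^{-2}E[\sup_{n\geq N}(d_n)^2]\to 0$.

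With both conditions in hand, the forward martingale CLT gives asymptotic normality of the normalized truncated sums, and letting $m\to\infty$ with the $L^2$-control above transfers the conclusion to $M_N/s_N$. The main obstacle I anticipate is precisely this passage to the infinite tail: unlike a finite-horizon reversal, the reversed array has unbounded length, so one must ensure the truncation error is negligible \emph{uniformly in $N$} rather than merely for fixed $N$, and it is here that $s_N^2\to 0$ and the maximal condition \eqref{eq:HallHeyde80Cor3.4-1} do the real work. Because this is exactly the situation packaged by Corollary 3.4 of \cite{HallHeyde80}, the economical alternative---and the route I would take in the final write-up---is to quote that corollary directly after matching the notation.
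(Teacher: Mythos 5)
Your proposal is correct and ends up where the paper does: the paper offers no proof of this lemma at all, simply invoking it as a special case of Corollary 3.4 of \cite{HallHeyde80}, which is exactly the ``economical alternative'' you settle on. The time-reversal sketch you give (turning the tail differences into a forward martingale difference array, checking negligibility of the maximal increment and convergence of the normalized quadratic variation, and controlling the truncation via $s_{m+1}^2\to 0$) is a faithful outline of how that corollary is established in the reference, so it is a sound, if unnecessary, supplement.
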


Recalling that $d_n=c_n \varphi_*^{(n)}$, we have
\[ \sup_{n \geq N} (d_n)^2 = \sup_{n \geq N} (c_n \varphi_*^{(n)})^2 \leq \dfrac{1}{4} \sup_{n \geq N} (c_n)^2, \]
which says that \eqref{eq:HallHeyde80Cor3.4-1} follows from \eqref{eq:HallHeyde80Cor3.4-1suff1} in the lemma below.

\begin{lemma} The condition \eqref{eq:HallHeyde80Cor3.4-1suff1forULIL}
implies that
\begin{equation}
\lim_{N \to \infty} \dfrac{\sup_{n \geq N} (c_n)^2}{\sum_{n=N}^{\infty} (c_n)^2} = 0. \label{eq:HallHeyde80Cor3.4-1suff1}
\end{equation}
\end{lemma}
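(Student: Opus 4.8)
The plan is to strip the statement down to a soft fact about nonnegative tail sums. Introduce the shorthand $a_n := (c_n)^2 \geq 0$ and the tails $S_N := \sum_{n=N}^{\infty} a_n$, so that the hypothesis \eqref{eq:HallHeyde80Cor3.4-1suff1forULIL} reads $\lim_{N\to\infty} a_N/S_N = 0$ while the desired conclusion \eqref{eq:HallHeyde80Cor3.4-1suff1} reads $\lim_{N\to\infty} (\sup_{n\geq N} a_n)/S_N = 0$. Since the standing assumption $\sum_{n=N}^{\infty}(c_n)^2>0$ for every $N$ guarantees $S_N>0$, all the ratios below are well defined.

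The key structural observation I would exploit is the monotonicity of the tails: from $S_N = a_N + S_{N+1}$ we get $S_{N+1}\leq S_N$, hence $S_m \leq S_N$ whenever $m\geq N$. This is exactly the inequality that lets a single-term hypothesis dominate a whole supremum. Indeed, for every $m\geq N$,
\[ \dfrac{a_m}{S_N} = \dfrac{a_m}{S_m}\cdot\dfrac{S_m}{S_N} \leq \dfrac{a_m}{S_m}, \]
because $S_m/S_N \leq 1$. Taking the supremum over $m\geq N$ on both sides (the left denominator $S_N$ being independent of $m$) gives
\[ \dfrac{\sup_{m\geq N} a_m}{S_N} \leq \sup_{m\geq N}\dfrac{a_m}{S_m}. \]

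It remains only to note that the right-hand side tends to $0$ as $N\to\infty$: it is the supremum over a tail of the sequence $a_m/S_m$, which converges to $0$ by hypothesis, and the supremum over a shrinking tail of a null sequence is itself null (equivalently, it equals $\limsup_m a_m/S_m = 0$). Squeezing the nonnegative left-hand side between $0$ and this quantity yields \eqref{eq:HallHeyde80Cor3.4-1suff1}. If one prefers an explicit $\varepsilon$-version, fix $\varepsilon>0$, choose $N_0$ with $a_m/S_m<\varepsilon$ for all $m\geq N_0$, and conclude that for $N\geq N_0$ and every $m\geq N$ one has $a_m/S_N \leq a_m/S_m < \varepsilon$, whence $(\sup_{m\geq N} a_m)/S_N \leq \varepsilon$.

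I do not anticipate a genuine obstacle here: the assertion is essentially a restatement of the monotonicity of tail sums, and the only point requiring care is the \emph{direction} of the inequality $S_m\leq S_N$ for $m\geq N$, which is precisely what converts the one-term control $a_m/S_m$ into control of $a_m/S_N$ and thereby of the supremum.
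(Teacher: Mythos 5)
Your argument is correct and is essentially identical to the paper's proof: both use the monotonicity of the tail sums $S_m \leq S_N$ for $m \geq N$ to bound $a_m/S_N$ by $a_m/S_m = \gamma_m$, then take the supremum over $m \geq N$ and invoke $\limsup_m \gamma_m = 0$. No substantive difference.
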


\begin{proof} Set 
\[ \gamma_N := \dfrac{(c_N)^2}{ \sum_{n=N}^{\infty} (c_n)^2} \]
and assume that $\displaystyle \lim_{N \to \infty} \gamma_N=0$.
Since 
\begin{align*}
 (c_m)^2 = \gamma_m \sum_{n=m}^{\infty} (c_n)^2 
 \leq \gamma_m \sum_{n=N}^{\infty} (c_n)^2
\end{align*}
for any $m \geq N$, we have 
\begin{align*}
 0 \leq \dfrac{\sup_{m \geq N} (c_m)^2}{\sum_{n=N}^{\infty} (c_n)^2} \leq \sup_{m \geq N} \gamma_m.
\end{align*}
Noting that $\displaystyle \lim_{N \to \infty} \sup_{m \geq N} \gamma_m=\lim_{N \to \infty} \gamma_N=0$, we have \eqref{eq:HallHeyde80Cor3.4-1suff1}. 
\end{proof}

To show \eqref{eq:HallHeyde80Cor3.4-2}, we prove
\begin{align}
\lim_{N \to \infty} E\left[ \left( \dfrac{Q_N^2}{s_N^2} -1 \right)^2 \right]=0. \label{eq:HallHeyde80Cor3.4-1suff2bis}
\end{align}
Note that
\begin{align*}
E\left[ \left( \dfrac{Q_N^2}{s_N^2} -1 \right)^2 \right] &= \dfrac{E\left[ (Q_N^2 - E[Q_N^2] )^2 \right]}{s_N^4} 
= \dfrac{E[(Q_N^2)^2] - (E[Q_N^2])^2}{s_N^4}.
\end{align*}
The numerator is equal to
\begin{align}
&E\left[ \left\{ \sum_{n=N}^{\infty} (c_n \varphi_*^{(n)})^2 \right\}^2 \right] - \left\{ E\left[\sum_{n=N}^{\infty} (c_n\varphi_*^{(n)})^2 \right] \right\}^2 \notag \\
&= \sum_{n=N}^{\infty} (c_n)^4 \left\{ E[(\varphi_*^{(n)})^4]-E[(\varphi_*^{(n)})^2]^2 \right\}  \notag \\
&\quad +2\sum_{N \leq i <j} (c_i)^2 (c_j)^2 \left\{ E[(\varphi_*^{(i)})^2(\varphi_*^{(j)})^2]-E[(\varphi_*^{(i)})^2]E[(\varphi_*^{(j)})^2] \right\}. \label{eq:OsakaTakeiCLTL2}
\end{align}
The first term in the right hand side is
\begin{align*}
\left(\dfrac{1}{80}-\dfrac{1}{144}\right) \sum_{n=N}^{\infty} (c_n)^4 = \dfrac{1}{180}\sum_{n=N}^{\infty} (c_n)^4.
\end{align*}
To estimate the second term in the right hand side, we prepare a lemma.

\begin{lemma} \label{lem:OsakaMixing} For any positive integers $i,j$ with $i<j$,
\[ E[(\varphi_*^{(i)})^2(\varphi_*^{(j)})^2]-E[(\varphi_*^{(i)})^2]E[(\varphi_*^{(j)})^2] = \dfrac{1}{180} \cdot \dfrac{1}{4^{j-i}}. \]
\end{lemma}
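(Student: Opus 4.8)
The plan is to reduce everything to moment computations for the i.i.d.\ family of Rademacher functions, using the representation in Lemma \ref{lem:Kono87Lem1}. Writing $S_n := \sum_{k=n+1}^{\infty} R_k/2^k$, Lemma \ref{lem:Kono87Lem1} together with $R_n^2 \equiv 1$ gives $(\varphi_*^{(n)})^2 = 4^{n-1} S_n^2$, so that
\[ E[(\varphi_*^{(i)})^2(\varphi_*^{(j)})^2] = 4^{i+j-2}\, E[S_i^2 S_j^2]. \]
The key structural observation is that, splitting the tail sum at $j$,
\[ S_i = A + S_j, \qquad A := \sum_{k=i+1}^{j} \frac{R_k}{2^k}, \]
the two pieces $A$ and $S_j$ are functions of the disjoint blocks $\{R_{i+1},\dots,R_j\}$ and $\{R_{j+1},R_{j+2},\dots\}$ of the i.i.d.\ family $\{R_k\}$, hence are independent, and $E[A]=0$.

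First I would expand $S_i^2 S_j^2 = (A+S_j)^2 S_j^2 = A^2 S_j^2 + 2 A S_j^3 + S_j^4$ and take expectations. By independence of $A$ and $S_j$ the middle term factors as $E[A]\,E[S_j^3]=0$, leaving
\[ E[S_i^2 S_j^2] = E[A^2]\,E[S_j^2] + E[S_j^4]. \]
The second-moment factors are immediate geometric series:
\[ E[S_j^2] = \sum_{k=j+1}^{\infty} \frac{1}{4^k} = \frac{1}{3\cdot 4^{j}}, \qquad E[A^2] = \sum_{k=i+1}^{j} \frac{1}{4^k} = \frac{1}{3 \cdot 4^i}\left(1 - \frac{1}{4^{j-i}}\right). \]

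The one genuinely computational step is the fourth moment $E[S_j^4]$. I would use the standard identity for weighted sums of independent symmetric $\pm 1$ variables, where only the all-equal and the two-distinct-pairs index patterns survive, namely $E[(\sum_k a_k R_k)^4] = 3(\sum_k a_k^2)^2 - 2\sum_k a_k^4$. With $a_k = 2^{-k}$ for $k \geq j+1$ and $\sum_{k \geq j+1} 16^{-k} = \tfrac{1}{15} 16^{-j}$, this yields $E[S_j^4] = \frac{1}{5\cdot 16^j}$; as a consistency check one has $4^{2(j-1)}E[S_j^4] = E[(\varphi_*^{(j)})^4] = 1/80$, in agreement with the stated fourth moment.

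Finally I would substitute the three moments, multiply by $4^{i+j-2}$, and subtract $E[(\varphi_*^{(i)})^2]E[(\varphi_*^{(j)})^2] = (1/12)^2 = 1/144$. The term coming from $E[A^2]E[S_j^2]$ contributes $\frac{1}{144}(1 - 4^{-(j-i)})$ and the term from $E[S_j^4]$ contributes $\frac{1}{80}\, 4^{-(j-i)}$, so the constant $1/144$ cancels and the covariance collapses to $(\tfrac{1}{80} - \tfrac{1}{144})\, 4^{-(j-i)} = \tfrac{1}{180}\, 4^{-(j-i)}$, which is the claim. The only place demanding care is the fourth-moment bookkeeping in $E[S_j^4]$, that is, tracking which quadruple products $E[R_{k_1}R_{k_2}R_{k_3}R_{k_4}]$ are nonzero; everything else is elementary geometric summation together with the final arithmetic identity $\tfrac{1}{80}-\tfrac{1}{144} = \tfrac{1}{180}$.
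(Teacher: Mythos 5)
Your proof is correct and follows essentially the same route as the paper's: both rest on K\^{o}no's representation, the decomposition of the longer tail sum $S_i$ at index $j$ into two independent Rademacher blocks, the vanishing of the cross term by symmetry, and the final arithmetic $\tfrac{1}{80}-\tfrac{1}{144}=\tfrac{1}{180}$. The only cosmetic differences are that the paper first reduces to the case $i=1$ by stationarity of $(R_i,R_{i+1},\dots)$ and reads off the fourth moment from $E[(\varphi_*^{(n+1)})^4]=\tfrac{1}{80}$, whereas you keep general $i<j$ and compute $E[S_j^4]$ directly from the Rademacher fourth-moment identity.
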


\begin{proof} Since $(R_i,R_{i+1},\cdots)$ has the same distribution as $(R_1,R_2,\cdots)$, 
\[ E[(\varphi_*^{(i)})^2(\varphi_*^{(j)})^2] = E[(\varphi_*^{(1)})^2(\varphi_*^{(j-i+1)})^2] \]
by Lemma \ref{lem:Kono87Lem1}. Let $n=j-i$. Again by Lemma \ref{lem:Kono87Lem1},
\begin{align*}
&E[(\varphi_*^{(1)})^2(\varphi_*^{(n+1)})^2] = (2^n)^2 \cdot E\left[ \left( \sum_{k=2}^{\infty} \dfrac{R_k}{2^k} \right)^2 \cdot  \left( \sum_{k'=n+2}^{\infty} \dfrac{R_{k'}}{2^{k'}} \right)^2\right].
\end{align*}
Using the independence of $\{R_k\}$,
\begin{align*}
E\left[ \left( \sum_{k=2}^{\infty} \dfrac{R_k}{2^k} \right)^2 \cdot  \left( \sum_{k'=n+2}^{\infty} \dfrac{R_{k'}}{2^{k'}} \right)^2\right]
&=E\left[ \left( \sum_{k=2}^{n+1}\dfrac{R_k}{2^k} \right)^2 \right] \cdot E\left[ \left( \sum_{k'=n+2}^{\infty} \dfrac{R_{k'}}{2^{k'}} \right)^2\right] \\
&\quad + 2 E\left[ \sum_{k=2}^{n+1}\dfrac{R_k}{2^k}  \right] \cdot E\left[ \left( \sum_{k'=n+2}^{\infty} \dfrac{R_{k'}}{2^{k'}} \right)^3\right] \\
&\quad + E\left[ \left( \sum_{k'=n+2}^{\infty} \dfrac{R_{k'}}{2^{k'}} \right)^4\right].
\end{align*}
Let us look at the right hand side. The first term is
\begin{align*}
\left\{ \sum_{k=2}^{n+1} \dfrac{1}{(2^k)^2} \right\} \cdot \dfrac{E[(\varphi_*^{(n+1)})^2]}{(2^n)^2} &=\dfrac{1}{4^2} \cdot \frac{1-\tfrac{1}{4^n}}{1-\tfrac{1}{4}} \cdot \dfrac{E[(\varphi_*^{(n+1)})^2]}{4^n} \\
&=\dfrac{1}{4^n}\left( 1-\frac{1}{4^n}\right) E[(\varphi_*^{(1)})^2] E[(\varphi_*^{(n+1)})^2].
\end{align*}
The second term is $0$. The third term is
\begin{align*}
\dfrac{E[(\varphi_*^{(n+1)})^4]}{(2^n)^4} = \dfrac{1}{80} \cdot \dfrac{1}{4^{2n}}.
\end{align*}
Thus we have
\begin{align*}
E[(\varphi_*^{(1)})^2(\varphi_*^{(n+1)})^2]-E[(\varphi_*^{(1)})^2]E[(\varphi_*^{(n+1)})^2] &= \left( \dfrac{1}{80}-\dfrac{1}{144} \right)\cdot \dfrac{1}{4^n} \\
&= \dfrac{1}{180} \cdot \dfrac{1}{4^n}.
\end{align*}
\end{proof}

Using Lemma \ref{lem:OsakaMixing}, the second term of the right hand side of \eqref{eq:OsakaTakeiCLTL2} is
\begin{align*}
2\sum_{i=N}^{\infty} (c_i)^2 \sum_{j=i+1}^{\infty} (c_j)^2 \cdot \dfrac{1}{180} \cdot \dfrac{1}{4^{j-i}} 
&\leq \dfrac{1}{90} \left(\sup_{j \geq N} (c_j)^2\right) \left(\sum_{i=N}^{\infty} (c_i)^2 \sum_{j=i+1}^{\infty}  \dfrac{1}{4^{j-i}}\right) \\
&= \dfrac{1}{270} \left(\sup_{j \geq N} (c_j)^2\right) \left(\sum_{i=N}^{\infty} (c_i)^2\right).
\end{align*}
Thus we have
\begin{align*}
E\left[ \left( \dfrac{Q_N^2}{s_N^2} -1 \right)^2 \right] \leq \dfrac{\frac{1}{270} \left(\sup_{j \geq N} (c_j)^2\right) \left(\sum_{i=N}^{\infty} (c_i)^2\right)}{\left(\frac{1}{12} \sum_{n=N}^{\infty} (c_n)^2 \right)^2} 
=\dfrac{8}{15} \cdot \dfrac{\sup_{j \geq N} (c_j)^2}{\sum_{n=N}^{\infty} (c_n)^2},
\end{align*}
which says that \eqref{eq:HallHeyde80Cor3.4-1suff1} implies \eqref{eq:HallHeyde80Cor3.4-1suff2bis}. This completes the proof of Theorem \ref{thm:OT19MainResult} (iii).

\subsection{The law of iterated logarithm}

The first half of Theorem \ref{thm:OT19MainResult} (iv) can be obtained by a similar idea as in the proof of Theorem 2 of Azuma \cite{Azuma67} (see also p. 238--240 in Stout \cite{Stout74}). We use the following lemma.
\begin{lemma}[cf. \cite{Azuma67}, Lemma 2] \label{lem:Azuma67Lem2Tail}
Let $a$ and $b$ be positive integers with $a<b$. For any real number $\lambda$,
\[ E\left[ \exp\left( \lambda \max_{a \leq N \leq b} \left| \sum_{n=N}^{\infty} c_n \varphi_*^{(n)} \right| \right) \right] \leq 8\exp\left\{ \dfrac{\lambda^2}{2} \sum_{n=a}^{\infty} (c_n)^2 \right\}. \]
\end{lemma}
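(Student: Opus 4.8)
The plan is to reduce this maximal inequality over the \emph{reverse} martingale $(M_N,\mathcal{T}_N)$ to the single-index exponential estimate of Lemma~\ref{lem:Azuma67Lem1Tail}, by means of a time-reversal combined with Doob's maximal inequality. The essential observation is that although $(\mathcal{T}_N)$ is a \emph{decreasing} family, the index $N$ here ranges over the \emph{finite} block $a\le N\le b$, so reversing the order turns the reverse martingale into an ordinary forward martingale indexed by $k=0,1,\dots,b-a$, to which the usual submartingale machinery applies.

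Concretely, I would set $m:=b-a$ and, for $0\le k\le m$, put $Y_k:=M_{b-k}$ and $\mathcal{G}_k:=\mathcal{T}_{b-k}$. Each $\varphi_*^{(n)}$ is $\mathcal{T}_n$-measurable (by Lemma~\ref{lem:Kono87Lem1} it is a function of $R_n,R_{n+1},\dots$), so the tail sum $M_N$ is $\mathcal{T}_N$-measurable, whence $Y_k$ is $\mathcal{G}_k$-measurable; and $(\mathcal{G}_k)_{0\le k\le m}$ is now an \emph{increasing} filtration. The reverse-martingale identity of Lemma~\ref{lem:Kono87Thm1vi}, namely $E[M_N\mid\mathcal{T}_{N+1}]=M_{N+1}$, reads $E[Y_{k+1}\mid\mathcal{G}_k]=Y_k$, so $(Y_k,\mathcal{G}_k)_{0\le k\le m}$ is a genuine forward martingale, and clearly $\max_{a\le N\le b}|M_N|=\max_{0\le k\le m}|Y_k|$.

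It suffices to treat $\lambda>0$, since for $\lambda\le 0$ the left-hand side is at most $1$ and the inequality is trivial. For $\lambda>0$ I would introduce $Z_k:=\exp(\tfrac{\lambda}{2}|Y_k|)$. As $|\cdot|$ is convex, $|Y_k|$ is a nonnegative submartingale; composing with the convex nondecreasing map $x\mapsto e^{(\lambda/2)x}$ keeps it a nonnegative submartingale, and it is square-integrable by Lemma~\ref{lem:Azuma67Lem1Tail}. Doob's $L^2$ maximal inequality then gives $E[(\max_k Z_k)^2]\le 4\,E[Z_m^2]$. Since squaring and $t\mapsto e^{\lambda t}$ are increasing on the relevant ranges, $(\max_k Z_k)^2=\max_k Z_k^2=\exp(\lambda\max_k|Y_k|)=\exp(\lambda\max_{a\le N\le b}|M_N|)$, while $Z_m^2=\exp(\lambda|M_a|)$. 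Invoking Lemma~\ref{lem:Azuma67Lem1Tail}, which bounds $E[\exp(\lambda|M_a|)]\le 2\exp(\tfrac{\lambda^2}{2}\sum_{n=a}^{\infty}(c_n)^2)$, the two constants multiply to $4\cdot 2=8$ and yield the claimed estimate.

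I expect the only genuine obstacle to be the time-reversal step: one must verify carefully that restricting to the finite window $[a,b]$ legitimately produces a forward martingale on an increasing filtration, and that the maximal function is preserved under the reindexing $N\mapsto b-N$. Everything downstream — the submartingale property of $\exp(\tfrac{\lambda}{2}|Y_k|)$, Doob's $L^2$ inequality, and the reduction to the single-index bound — is standard, and it is precisely the bookkeeping of these two factors that produces the constant $8$. Treating $\max|M_N|$ in one stroke via $|Y_k|$ (rather than bounding the two one-sided maxima separately, which would only give $16$) is what keeps the constant sharp enough to match the statement.
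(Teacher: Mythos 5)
Your proof is correct and follows essentially the same route as the paper: the paper's proof consists precisely of the time-reversal observation ($Y_k:=M_{b-k}$, $\mathcal{G}_k:=\mathcal{T}_{b-k}$ is a forward martingale) followed by the argument of Azuma's Lemma 2, which is exactly the Doob $L^2$ maximal inequality applied to the exponential submartingale that you carry out, landing on the same constant $8=4\cdot 2$ via Lemma \ref{lem:Azuma67Lem1Tail}.
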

\begin{proof} For a reversed martingale $(M_N,\mathcal{T}_N)$, the sequence $(Y_n, \mathcal{G}_n)_{n=0,\cdots,b-a}$ defined by
\[ Y_n := M_{b-n}, \quad \mbox{and} \quad \mathcal{G}_n := \mathcal{T}_{b-n} \]
forms a martingale. Using this observation we can prove Lemma \ref{lem:Azuma67Lem2Tail} by a similar argument to Lemma 2 in \cite{Azuma67}. 
\end{proof}

Recall that 
\[ s_N^2=\dfrac{1}{12} \sum_{n=N}^{\infty} (c_n)^2 \to 0 \quad \mbox{as $N \to \infty$.}
\]
We will show that
\begin{equation} \label{eq:Azuma67Theorem2Concl}
P\left(\mbox{$\displaystyle \left| \sum_{n=N}^{\infty} c_n \varphi_*^{(n)} \right| > (1+\varepsilon) \phi(s_N^2)$ for infinitely many $N$}\right) 
=0.
\end{equation}
Fix $\varepsilon>0$, and $p>1$ satisfying $(1+\varepsilon)/p>1$. We define a subsequence $\{N_k\}$ by 
\begin{equation}
 N_k := \min\left\{ N : \dfrac{1}{s_N^2} > p^k \right\}.\label{eq:Stout74p240subseq}
\end{equation}
Noting that \eqref{eq:HallHeyde80Cor3.4-1suff1forULIL} is equivalent to
\[ \lim_{N \to \infty} \dfrac{s_{N+1}^2}{s_N^2} = 1, \]
we have
\begin{equation}
\dfrac{1}{s_{N_k}^2} \sim p^k \quad (k \to \infty),
\quad \mbox{and} \quad
\lim_{k \to \infty} \dfrac{s_{N_{k+1}}^2}{s_{N_k}^2} = \dfrac{1}{p}.
\label{eq:Azuma67Theorem2subseq}
\end{equation}
Since $\phi(t)$ is increasing near $t=0$, we can see that
\begin{align*}
&P\left( \bigcup_{N=N_k}^{N_{k+1}-1} \left\{ \left| \sum_{n=N}^{\infty} c_n \varphi_*^{(n)} \right| > (1+\varepsilon) \phi(s_N^2) \right\} \right) \\
&\leq P\left( \bigcup_{N=N_k}^{N_{k+1}-1} \left\{ \left| \sum_{n=N}^{\infty} c_n \varphi_*^{(n)} \right| > (1+\varepsilon) \phi(s_{N_{k+1}}^2) \right\} \right)=:(*).
\end{align*}
By Lemma \ref{lem:Azuma67Lem2Tail} and Markov's inequality,
\begin{align*}
P\left( \max_{N_k \leq N < N_{k+1}} \left| \sum_{n=N}^{\infty} c_n \varphi_*^{(n)} \right| > u \right) \leq 8\exp\left( \dfrac{\lambda^2s_{N_k}^2}{2}   - \lambda u \right)
\end{align*}
for any $u>0$. Applying this to
\begin{align*}
\lambda=\dfrac{(1+\varepsilon) \phi(s_{N_{k+1}}^2)}{s_{N_k}^2} 
\quad \mbox{and} \quad
u=(1+\varepsilon) \phi(s_{N_{k+1}}^2),
\end{align*}
we have
\begin{align*}
(*)&\leq 8\exp\left\{ -\dfrac{(1+\varepsilon)^2\phi(s_{N_{k+1}}^2)^2}{2s_{N_k}^2}  \right\} \\
&=8\exp\left\{ -(1+\varepsilon)^2 \cdot \dfrac{s_{N_{k+1}}^2}{s_{N_k}^2} \cdot \log\log \left(\dfrac{1}{s_{N_{k+1}}^2}\right) \right\}.
\end{align*}
By \eqref{eq:Stout74p240subseq} and \eqref{eq:Azuma67Theorem2subseq},
\begin{align*}
\dfrac{1}{s_{N_{k+1}}^2} \geq p^{k+1} > p^k,
\mbox{ and } 
\dfrac{s_{N_{k+1}}^2}{s_{N_k}^2} \geq \dfrac{1+\varepsilon/2}{1+\varepsilon} \cdot \dfrac{1}{p} \mbox{ for sufficiently large $k$.}
\end{align*}
Thus we have
\begin{align*}
(*)&\leq 8\exp\left\{ -\dfrac{(1+\varepsilon)(1+\varepsilon/2)}{p} \log\log  p^k \right\},
\end{align*}
and we can find a positive constant $C$ satisfying $(*) \leq Ck^{-(1+\varepsilon/2)}$.
The Borel-Cantelli lemma implies \eqref{eq:Azuma67Theorem2Concl}.

To prove the second half of Theorem \ref{thm:OT19MainResult} (iv), we use the following lemma, which is a special case of Scott and Huggins \cite{ScottHuggins83}, Theorem 6.
\begin{lemma}[cf. \cite{ScottHuggins83}, Theorem 6] \label{thm:ScottHuggins83Thm6s} Assume that $(M_N,\mathcal{T}_N)$ is a reversed martingale with uniformly bounded differences. If a positive non-increasing sequence $\{W_N\}$ satisfies that
\begin{align}
 &\lim_{N \to \infty} \dfrac{1}{W_N^2} \sum_{n=N}^{\infty} E[(d_n)^2\mid \mathcal{T}_{n+1}] = 1\quad \mbox{a.s.,} \label{eq:ScottHuggins83(3.7)s}\\
 & \sum_{N=1}^{\infty}\dfrac{1}{W_N^4} E[(d_N)^4 \mid \mathcal{T}_{N+1}] <+\infty \quad \mbox{a.s.,} \label{eq:ScottHuggins83(3.8)s}\\
 &\lim_{N \to \infty} \dfrac{W_N}{W_{N+1}}=1, \quad \mbox{and}\label{eq:ScottHuggins83(3.9a)s} \\
 &\lim_{N \to \infty} W_N=0, \label{eq:ScottHuggins83(3.9b)s}
\end{align}
then 
\[ \limsup_{N \to \infty} \pm \dfrac{M_N}{\phi(W_N^2)} =1 \quad \mbox{a.s..}\]
\end{lemma}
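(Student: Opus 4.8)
The plan is to obtain this lemma as a specialization of the reverse-martingale law of the iterated logarithm in Scott and Huggins \cite{ScottHuggins83}, whose Theorem 6 is stated for reverse martingales normalized by a general sequence and whose proof proceeds by a Skorokhod-type embedding of the process into a Brownian motion. The point that fixes $\phi(t)=\sqrt{2t\log\log(1/t)}$ as the correct normalizer is that $W_N\to 0$ by \eqref{eq:ScottHuggins83(3.9b)s}: the governing Brownian statement is the \emph{local} law of the iterated logarithm at the origin, $\limsup_{t\downarrow 0}\pm B_t/\phi(t)=1$ a.s.

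First I would pass to forward time. As already observed in the proof of Lemma \ref{lem:Azuma67Lem2Tail}, for a fixed horizon the sequence $Y_n:=M_{b-n}$, $\mathcal{G}_n:=\mathcal{T}_{b-n}$ is a genuine martingale whose increments are the $d_n$, and these are uniformly bounded by hypothesis. The Skorokhod embedding then represents, on an enlarged probability space, the tail sum $M_N=\sum_{n=N}^{\infty}d_n$ as $B_{T_N}$ for a decreasing family of embedding times $T_N\downarrow 0$, where $T_N$ equals, up to the usual martingale fluctuation, the accumulated conditional variance $\sum_{n=N}^{\infty}E[(d_n)^2\mid\mathcal{T}_{n+1}]$.

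The two quantitative hypotheses are exactly what the embedding requires. Condition \eqref{eq:ScottHuggins83(3.7)s} identifies the clock: it forces $T_N/W_N^2\to 1$ a.s., so that after the time change $M_N$ behaves like $B_{W_N^2}$ with $W_N^2\downarrow 0$. Condition \eqref{eq:ScottHuggins83(3.8)s}, a fourth-moment summability in the spirit of a Kolmogorov/Lindeberg condition, yields by Borel--Cantelli that $\max_{n\ge N}|d_n|$ is negligible on the scale $\phi(W_N^2)$, so that no single increment can distort the limsup and the embedding discrepancy $M_N-B_{T_N}$ is $o(\phi(W_N^2))$; here the uniform boundedness of the differences removes any need for truncation and keeps all moments used in the construction finite. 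Finally \eqref{eq:ScottHuggins83(3.9a)s} provides the regularity to interpolate the continuum statement from the clock values $W_N^2$ back to the discrete index $N$ without losing constants, and \eqref{eq:ScottHuggins83(3.9b)s} places us at the origin. Combining the Brownian local LIL with $T_N\sim W_N^2$ then gives $\limsup_{N\to\infty}\pm M_N/\phi(W_N^2)=1$ a.s.

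The main obstacle is the almost-sure control of the embedding clock and error: one must show $T_N\sim W_N^2$ almost surely, not merely in probability, and that both $M_N-B_{T_N}$ and $\phi(T_N)-\phi(W_N^2)$ are asymptotically negligible against $\phi(W_N^2)$. This is precisely the content of the construction in \cite{ScottHuggins83}. In practice, therefore, I would simply verify that our four conditions \eqref{eq:ScottHuggins83(3.7)s}--\eqref{eq:ScottHuggins83(3.9b)s}, together with the uniform boundedness of $\{d_n\}$, are the specialization of the hypotheses of their Theorem 6, and then read off the stated two-sided conclusion.
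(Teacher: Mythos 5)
Your proposal is correct and takes essentially the same route as the paper: the lemma is not proved from scratch there either, but is obtained exactly as you suggest, by specializing Theorem 6 of \cite{ScottHuggins83} --- with a deterministic norming sequence $\{W_N\}$, with the uniform boundedness of the differences rendering the truncation sequence $\{Z_n\}$ of \cite{ScottHuggins83} unnecessary, and with the functional LIL conclusion weakened to the stated ordinary form. Your sketch of the Skorokhod-embedding mechanism simply recounts the internals of the cited theorem and is consistent with it.
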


\begin{remark} We explain the difference between the above lemma and Theorem 6 of \cite{ScottHuggins83}. Here we only use a deterministic norming sequence $\{W_N\}$. As $\{M_N\}$ has uniformly bounded differences, the sequence $\{Z_n\}$ in \cite{ScottHuggins83}, used for truncation, is not needed. Finally, the conclusion of Theorem 6 of \cite{ScottHuggins83} is the functional law of the iterated logarithm, while our statement is in an ordinary form for simplicity.
\end{remark}

We apply this lemma to $W_N^2 := s_N^2$.
As in the first half, \eqref{eq:ScottHuggins83(3.9a)s} and \eqref{eq:ScottHuggins83(3.9b)s} hold. Using \eqref{eq:TakagiClassLIL190310-2} and
\begin{align*}
\dfrac{1}{s_N^4} E[(d_N)^4 \mid \mathcal{T}_{N+1}] \leq \dfrac{\frac{1}{16}(c_N)^4 }{\left\{ \frac{1}{12}\sum_{n=N}^{\infty} (c_n)^2 \right\}^2}, 
\end{align*}
we have \eqref{eq:ScottHuggins83(3.8)s}. By Lemma \ref{lem:Kono87Lem1},
\[ (\varphi_*^{(n)})^2 = \left(-2^{n-1} R_n \sum_{k=n+1}^{\infty}\dfrac{R_k}{2^k}\right)^2 = \left(-2^{n-1} \sum_{k=n+1}^{\infty}\dfrac{R_k}{2^k}\right)^2 \]
is $\mathcal{T}_{n+1}$-measurable, and we have
\begin{align*}
\dfrac{1}{s_N^2} \sum_{n=N}^{\infty} E[(d_n)^2\mid \mathcal{T}_{n+1}] &= \dfrac{1}{s_N^2} \sum_{n=N}^{\infty} (d_n)^2 =\dfrac{Q_N^2}{s_N^2}.
\end{align*}
This means that \eqref{eq:ScottHuggins83(3.7)s} is the almost-sure version of 
\eqref{eq:HallHeyde80Cor3.4-2}. Let $\mathcal{F}_0$ denote the trivial $\sigma$-algebra, and $\mathcal{F}_n:=\sigma(R_1,\cdots,R_n)$ for $n=1,2,\cdots$. Noting that $\varphi_*^{(n)}$ is independent of $\mathcal{F}_{n-1}$ again by Lemma \ref{lem:Kono87Lem1}, 
\begin{align*}
\sum_{n=N}^{\infty} E[(d_n)^2\mid \mathcal{F}_{n-1}] = \sum_{n=N}^{\infty} E[(d_n)^2] = \sum_{n=N}^{\infty} (c_n)^2 E[(\varphi_*^{(n)})^2] = s_N^2,
\end{align*}
which implies
\begin{align} \dfrac{Q_N^2}{s_N^2}-1 = \dfrac{1}{s_N^2} \sum_{n=N}^{\infty}  \{ (d_n)^2 - E[(d_n)^2\mid \mathcal{F}_{n-1}] \}. \label{eq:OsakaTakei19LILfromsLLN}
\end{align}
By \eqref{eq:TakagiClassLIL190310-2},
\begin{align*}
\sum_{N=1}^{\infty} \dfrac{1}{s_N^4} E[(d_N)^4 \mid \mathcal{F}_{N-1}] <+\infty,
\end{align*}
and Theorem 2.15 in \cite{HallHeyde80} implies that
\begin{align*}
\sum_{N=1}^{\infty} \dfrac{1}{s_N^2} \{ (d_N)^2 - E[(d_N)^2\mid \mathcal{F}_{N-1}] \} <+\infty \quad \mbox{a.s..}
\end{align*}
By a tail version of Kronecker's lemma (see Lemma 1 (ii) in \cite{Heyde77}), the right hand side of \eqref{eq:OsakaTakei19LILfromsLLN} converges to $0$ as $N \to \infty$. This completes the proof.

\section{Proof of Theorem \ref{thm:OT19GansoIncluded}} 

Our proof of Theorem \ref{thm:OT19GansoIncluded} relies on the following identity.

\begin{lemma} \label{lem:rTakagiSelfSimilar} For any $N=1,2,\cdots$,
\[ \dfrac{f_r(x) - f_{r,N}(x)}{\frac{1}{2}\sum_{n=N}^{\infty} r^n} = \dfrac{2(1-r)}{r}\cdot f_r(2^{N-1} x)= \dfrac{f_r(2^{N-1} x)}{E[f_r]}. \]
\end{lemma}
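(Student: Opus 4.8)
The plan is to rewrite the numerator $f_r(x)-f_{r,N}(x)$ as the tail sum $\sum_{n=N}^{\infty} r^n \varphi^{(n)}(x)$ and then exploit the self-similarity of the tent-map iterates recorded in the introduction, namely $\varphi^{(n)}(x)=\varphi(2^{n-1}x)$, to factor out a power of $r$ and recover $f_r$ evaluated at the rescaled point $2^{N-1}x$. Concretely, I would start from
\[ f_r(x)-f_{r,N}(x) = \sum_{n=N}^{\infty} r^n \varphi^{(n)}(x) = \sum_{n=N}^{\infty} r^n \varphi(2^{n-1}x), \]
substitute $m=n-N+1$, and use $2^{n-1}=2^{m-1}\cdot 2^{N-1}$ together with $\varphi(2^{m-1}y)=\varphi^{(m)}(y)$ applied to $y=2^{N-1}x$. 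This turns the sum into
\[ r^{N-1}\sum_{m=1}^{\infty} r^m \varphi^{(m)}(2^{N-1}x) = r^{N-1} f_r(2^{N-1}x). \]

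Next I would evaluate the denominator by summing the geometric series: since $0<|r|<1$,
\[ \frac{1}{2}\sum_{n=N}^{\infty} r^n = \frac{r^N}{2(1-r)}, \]
so dividing the two displays yields
\[ \frac{f_r(x)-f_{r,N}(x)}{\frac{1}{2}\sum_{n=N}^{\infty} r^n} = \frac{r^{N-1} f_r(2^{N-1}x)}{r^N/\{2(1-r)\}} = \frac{2(1-r)}{r}\, f_r(2^{N-1}x), \]
which is the first claimed equality. For the second equality it remains only to identify $E[f_r]$. Because $\{r^n\}\in\ell^1$, the defining series converges uniformly, so expectation and summation may be interchanged; using $E[\varphi^{(n)}]=1/2$ (as each $\varphi^{(n)}$ is uniform on $[0,1]$), I obtain $E[f_r]=\tfrac{1}{2}\sum_{n=1}^{\infty} r^n = \tfrac{r}{2(1-r)}$, whence $\tfrac{2(1-r)}{r}=1/E[f_r]$ and the last equality follows.

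The argument is entirely elementary, so there is no genuine obstacle; the only steps requiring a moment of care are the index shift fused with the identity $\varphi^{(n)}(x)=\varphi(2^{n-1}x)$, which is what produces the crucial factor $r^{N-1}$ and the argument $2^{N-1}x$, and the term-by-term expectation used to compute $E[f_r]$. Both are immediate from the hypothesis $0<|r|<1$ (equivalently $\{r^n\}\in\ell^1$), and this self-similar scaling identity is precisely what drives the non-vanishing distributional limit in Theorem \ref{thm:OT19GansoIncluded}, in contrast to the law-of-large-numbers behaviour of Theorem \ref{thm:OT19MainResult}.
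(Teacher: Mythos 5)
Your proposal is correct and follows essentially the same route as the paper: both rewrite the tail sum via the identity $\varphi^{(n)}(x)=\varphi(2^{n-1}x)$ (equivalently $\varphi^{(n+k)}(x)=\varphi^{(n)}(2^kx)$), shift the index to extract the factor $r^{N-1}f_r(2^{N-1}x)$, and then identify $\tfrac{1}{2}\sum_{n=N}^{\infty}r^n$ and $E[f_r]=\tfrac{1}{2}\cdot\tfrac{r}{1-r}$ by summing the geometric series and integrating term by term. No gaps.
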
 
\begin{proof}
Recalling that $\varphi^{(n)} (x) = \varphi(2^{n-1}x)$, we see 
\[ \varphi^{(n+k)} (x) = \varphi^{(n)} (2^k x)\quad \mbox{for $n,k=1,2,\cdots$.} \]
Using this, we have
\begin{align*}
f_r(x) - f_{r,N}(x) &= \sum_{n=N}^{\infty} r^n \varphi^{(n)}(x) \\
&= \sum_{n=1}^{\infty} r^{n+N-1} \varphi^{(n+N-1)}(x) \\
&= r^{N-1} \sum_{n=1}^{\infty} r^n \varphi^{(n)}(2^{N-1}x) \\
&= r^{N-1} f_r(2^{N-1} x) 
= \dfrac{1-r}{r} \left(\sum_{n=N}^{\infty} r^n\right) f_r(2^{N-1} x).
\end{align*}
Since 
\[ E[f_r] = \sum_{n=1}^{\infty} r^n E[\varphi^{(n)}] = \dfrac{1}{2} \cdot \dfrac{r}{1-r}, \]
we obtain the desired identity.
\end{proof}

\begin{remark} The formula for $r=1/2$ is 
\[ f_{\frac{1}{2}}(x) - f_{\frac{1}{2},N}(x) = \dfrac{1}{2^{N-1}} f_r(2^{N-1} x), \]
which shows the self-similarity of the graph of the Takagi function $T(x)=f_{\frac{1}{2}}(x)$.
\end{remark}

The dyadic transformation of $\Omega$, defined by $B(x)=2x -\lfloor 2x \rfloor$, is one of the fundamental examples in ergodic theory (see e.g. Chapter 1 of Billingsley \cite{Billingsley65}). Since $B$ is measure-preserving, for any $N$, the distribution of $\dfrac{f_r(2^{N-1} x)}{E[f_r]}$ is the same as that of $\dfrac{f_r(x)}{E[f_r]}$. This together with Lemma \ref{lem:rTakagiSelfSimilar} gives Theorem \ref{thm:OT19GansoIncluded} (i). Moreover, since $B$ is an ergodic transformation of $\Omega$, it follows from Birkhoff's individual ergodic theorem and von Neumann's mean ergodic theorem that 
\begin{align*}
\lim_{N' \to \infty} \dfrac{1}{N'} \sum_{N=1}^{N'} \dfrac{f_r(2^{N-1} x)}{E[f_r]}
&= \int_0^1 \dfrac{f_r(x)}{E[f_r]} \,dx =1
\quad \mbox{a.s. and in $L^1$.}
\end{align*}
In view of Lemma \ref{lem:rTakagiSelfSimilar}, this completes the proof of Theorem \ref{thm:OT19GansoIncluded} (ii).

\section{Conclusion}
In this article, precise results on the rate of convergence of Takagi class functions $f$ are obtained. It tells us how accurate an approximation of $f$ by its partial sums, and can be applied in drawing an accurate graph of $f$. It also reveals new aspects of chaotic behavior of the tail sum of Takagi class functions. The reader is invited to explore potential applications of the idea developed in the present paper.

\appendix
\section{Appendix} 

Once one recognizes that \eqref{ineq:subexpasymp} should hold, it can be quickly obtained by summing up the asymptotics
\[ N^{1-\beta} e^{-KN^{\beta}} - (N+1)^{1-\beta} e^{-K(N+1)^{\beta}} = (1+o(1))K\beta e^{-KN^{\beta}} \quad \mbox{as $N \to \infty$}, \]
which can be proved by differentiation of $N^{1-\beta} e^{-KN^{\beta}}$ in $N$.
For the sake of completeness, we give a constructive proof of \eqref{ineq:subexpasymp}.

\begin{lemma} \label{lem:cnsubexpstrict} For $K>0$ and $0<\beta < 1$, there exist positive constants $C_i=C_i(K,\beta)$ $(i=1,2)$ such that
\begin{align*}
\int_a^{\infty} e^{-K x^{\beta}}\,dx &\geq C_1 a^{1-\beta} e^{-Ka^{\beta}} \qquad \mbox{for any $a>0$, and} \\
\int_a^{\infty} e^{-K x^{\beta}}\,dx &\leq C_2 a^{1-\beta} e^{-Ka^{\beta}} \qquad \mbox{for $a\geq 1$ satisfying $K a^{\beta} \geq 1$.} 
\end{align*}
\end{lemma}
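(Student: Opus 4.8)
The plan is to reduce both inequalities to standard estimates on an upper incomplete gamma integral via the substitution $t=Kx^{\beta}$. Writing $\gamma:=\tfrac{1}{\beta}-1>0$ (here $0<\beta<1$ is used) and $s:=Ka^{\beta}$, one gets
\[ \int_a^{\infty} e^{-Kx^{\beta}}\,dx = \dfrac{1}{\beta K^{1/\beta}}\int_s^{\infty} t^{\gamma}e^{-t}\,dt, \]
exactly as in Example \ref{exs:ExpDame}. Since $s^{\gamma}=(Ka^{\beta})^{1/\beta-1}=K^{1/\beta-1}a^{1-\beta}$, the target quantity $a^{1-\beta}e^{-Ka^{\beta}}$ corresponds, up to the fixed factor $\tfrac{1}{\beta K}$, to $s^{\gamma}e^{-s}$. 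Thus it suffices to produce constants $c_1,c_2>0$ (depending on $\gamma$) with $c_1 s^{\gamma}e^{-s}\leq \int_s^{\infty}t^{\gamma}e^{-t}\,dt\leq c_2 s^{\gamma}e^{-s}$, the first valid for all $s>0$ and the second for $s\geq 1$.

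For the lower bound I would simply discard everything except a unit interval: on $[s,s+1]$ one has $t^{\gamma}\geq s^{\gamma}$ (as $\gamma>0$ makes $t\mapsto t^{\gamma}$ increasing) and $e^{-t}\geq e^{-(s+1)}$, so the integrand is at least $s^{\gamma}e^{-(s+1)}$ and integrating over the length-one interval yields $\int_s^{\infty}t^{\gamma}e^{-t}\,dt\geq e^{-1}s^{\gamma}e^{-s}$. This holds for every $s>0$, and translating back gives $C_1=\tfrac{1}{e\beta K}$ valid for all $a>0$, matching the first assertion.

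For the upper bound I would substitute $t=s+r$ and factor $(s+r)^{\gamma}=s^{\gamma}(1+r/s)^{\gamma}$. This is where the hypothesis $s=Ka^{\beta}\geq 1$ enters: because $r/s\leq r$, monotonicity in the base gives $(1+r/s)^{\gamma}\leq (1+r)^{\gamma}$, whence
\[ \int_s^{\infty}t^{\gamma}e^{-t}\,dt = s^{\gamma}e^{-s}\int_0^{\infty}(1+r/s)^{\gamma}e^{-r}\,dr \leq s^{\gamma}e^{-s}\int_0^{\infty}(1+r)^{\gamma}e^{-r}\,dr. \]
The remaining integral is a finite constant depending only on $\gamma$ (it equals $e\,\Gamma(\gamma+1,1)$), which furnishes $C_2=\tfrac{1}{\beta K}\int_0^{\infty}(1+r)^{1/\beta-1}e^{-r}\,dr$.

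The only genuinely non-mechanical choice is in the upper bound. A naive comparison such as $(1+r/s)^{\gamma}\leq e^{\gamma r/s}$ fails to leave a decaying integrand once $\gamma\geq 1$ (that is, $\beta\leq 1/2$), so one must resist converting the polynomial factor into an exponential and instead retain $(1+r)^{\gamma}$ intact, extracting the entire convergent gamma-type integral as the constant; the condition $Ka^{\beta}\geq1$ is exactly what licenses the clean estimate $r/s\leq r$ and keeps the constant from blowing up near $s=0$. Everything else is bookkeeping. Once the lemma is in hand, the asymptotic \eqref{ineq:subexpasymp} for the sum follows by sandwiching $\sum_{n=N}^{\infty}e^{-Kn^{\beta}}$ between $\int_N^{\infty}e^{-Kx^{\beta}}\,dx$ and $e^{-KN^{\beta}}+\int_N^{\infty}e^{-Kx^{\beta}}\,dx$, using that $e^{-Kx^{\beta}}$ is decreasing.
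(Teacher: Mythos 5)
Your proof is correct. You and the paper both begin with the substitution $t=Kx^{\beta}$, reducing the claim to two-sided bounds on the incomplete gamma integral $\int_s^{\infty}t^{\gamma}e^{-t}\,dt$ with $\gamma=\tfrac{1}{\beta}-1>0$ and $s=Ka^{\beta}$, but you estimate that integral differently. For the lower bound the paper performs a single integration by parts and discards the remainder (nonnegative because $\gamma>0$), keeping only the boundary term $s^{\gamma}e^{-s}$, which yields the slightly better constant $C_1=\tfrac{1}{\beta K}$; your restriction of the integral to the unit interval $[s,s+1]$ is equally valid for all $s>0$ and merely costs a factor $e^{-1}$. For the upper bound the paper integrates by parts $\lceil \gamma\rceil$ times until the exponent of $t$ becomes nonpositive and then bounds the final integral by $e^{-s}$, using $a\geq 1$ to absorb the intermediate powers $a^{1-i\beta}$ into $a^{1-\beta}$; you instead shift $t=s+r$ and use $(1+r/s)^{\gamma}\leq(1+r)^{\gamma}$ for $s\geq 1$, extracting the convergent constant $\int_0^{\infty}(1+r)^{\gamma}e^{-r}\,dr=e\,\Gamma(\gamma+1,1)$ in one step. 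Your upper-bound argument is arguably cleaner: it avoids the bookkeeping of iterated boundary terms and needs only $Ka^{\beta}\geq 1$ rather than both hypotheses, whereas the paper's version produces an explicit finite sum of gamma-quotient constants. Your concluding derivation of \eqref{ineq:subexpasymp} from the lemma by sandwiching the sum between integrals is exactly the paper's.
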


\begin{proof} 
By changing the variable to $t=Kx^{\beta}$, 
\begin{align*}
\int_a^{\infty} e^{-K x^{\beta}}\,dx &= \dfrac{1}{\beta K^{\frac{1}{\beta}}}\int_{Ka^{\beta}}^{\infty} t^{\frac{1}{\beta}-1} e^{-t}\,dt.
\end{align*}
Using integration by parts, we have
\begin{align*}
\int_{Ka^{\beta}}^{\infty} t^{\frac{1}{\beta}-1} e^{-t}\,dt 
&=\left[ -t^{\frac{1}{\beta}-1} e^{-t}\right]_{Ka^{\beta}}^{\infty} + \int_{Ka^{\beta}}^{\infty} \left(\frac{1}{\beta}-1\right) t^{\frac{1}{\beta}-2}e^{-t}\,dt \\
&\geq (Ka^{\beta})^{\frac{1}{\beta}-1} e^{-Ka^{\beta}} = K^{\frac{1}{\beta}-1} a^{1-\beta} e^{-Ka^{\beta}}.
\end{align*}
For the other direction, let $s:= \left\lceil \dfrac{1}{\beta}-1 \right\rceil$.
By repeated use of integration by parts, 
\begin{align*}
\int_{Ka^{\beta}}^{\infty} t^{\frac{1}{\beta}-1} e^{-t}\,dt 
&=(Ka^{\beta})^{\frac{1}{\beta}-1} e^{-Ka^{\beta}} + \dfrac{\Gamma(\tfrac{1}{\beta})}{\Gamma(\tfrac{1}{\beta}-1)}\int_{Ka^{\beta}}^{\infty} t^{\frac{1}{\beta}-2}e^{-t}\,dt \\
&= \cdots \\
&= \sum_{i=1}^s \dfrac{\Gamma(\tfrac{1}{\beta})}{\Gamma(\tfrac{1}{\beta}+i-1)} (Ka^{\beta})^{\frac{1}{\beta}-i}  e^{-Ka^{\beta}} + \dfrac{\Gamma(\tfrac{1}{\beta})}{\Gamma(\tfrac{1}{\beta}-s)}\int_{Ka^{\beta}}^{\infty} t^{\frac{1}{\beta}-s-1}e^{-t}\,dt.
\end{align*}
For $a \geq 1$, we have $a^{1-i\beta } \leq a^{1-\beta}$ for $i=1,\cdots,s$. 
Noting that $(1/\beta)-s-1 \leq 0$, for $a \geq 1$ satisfying $Ka^{\beta} \geq 1$, the second term in the right hand side is not more than
\[ \dfrac{\Gamma(\tfrac{1}{\beta})}{\Gamma(\tfrac{1}{\beta}-s)}\int_{Ka^{\beta}}^{\infty} e^{-t}\,dt = \dfrac{\Gamma(\tfrac{1}{\beta})}{\Gamma(\tfrac{1}{\beta}-s)} \cdot e^{-Ka^{\beta}}. \]
Thus we have
\[ \int_{Ka^{\beta}}^{\infty} t^{\frac{1}{\beta}-1} e^{-t}\,dt \leq \left( \sum_{i=1}^s \dfrac{\Gamma(\tfrac{1}{\beta})}{\Gamma(\tfrac{1}{\beta}+i-1)}K^{\frac{1}{\beta}-i} + \dfrac{\Gamma(\tfrac{1}{\beta})}{\Gamma(\tfrac{1}{\beta}-s)}\right)a^{1-\beta} e^{-Ka^{\beta}}. \]
\end{proof}

By this lemma, we have
\begin{align*}
\sum_{n=N}^{\infty} e^{-K n^{\beta}} &\geq \int_N^{\infty} e^{-K x^{\beta}}\,dx 
\geq C_1 N^{1-\beta} e^{-KN^{\beta}} \\
\intertext{for any $N$, and}
\sum_{n=N}^{\infty} e^{-K n^{\beta}} 
&\leq e^{-KN^{\beta}} +\int_N^{\infty} e^{-K x^{\beta}}\,dx 
\leq (1+C_2) N^{1-\beta} e^{-KN^{\beta}}
\end{align*}
for sufficiently large $N$.

\section*{Acknowledgements}
The authors deeply thank anonymous referees for their valuable comments. In particular, one of referees suggests simplification of some of our original arguments, which are adopted in the revised version. M.T. is partially supported by JSPS Grant-in-Aid for Young Scientists (B) No. 16K21039, and JSPS  Grant-in-Aid for Scientific Research (C) No. 19K03514.

\end{document}